\numberwithin{equation}{section}
\newtheorem{thm}{Theorem}[section]
\newtheorem{cor}[thm]{Corollary}
\newtheorem{prop}[thm]{Proposition}
\newtheorem{lem}[thm]{Lemma}
\newtheorem*{openproblem*}{Problem}
\newtheorem*{quest*}{Question}
\newtheorem*{problem*}{Problem}
\newtheorem*{claim*}{Claim}
\theoremstyle{definition}
\theoremstyle{remark}
\newtheorem{rem}[thm]{Remark}
\newcommand{\bQ}{\mathbb{Q}}
\newcommand{\bR}{\mathbb{R}}
\newcommand{\bS}{\mathbb{S}}
\newcommand{\cC}{\mathcal{C}}
\newcommand\B{\mathrm{B}}
\newcommand\SO{\mathrm{SO}}
\newcommand\U{\mathrm{U}}
\newcommand\Diff{\mathrm{Diff}}
\newcommand{\hcoker}{/\!\!/}
\newcommand\Id{\mathrm{Id}}
\let\c@equation\c@thm
\numberwithin{equation}{section}
\let\@wraptoccontribs\wraptoccontribs
\title[On relative Gelfand-Fuks cohomology of spheres]{A note on relative Gelfand-Fuks cohomology of Spheres}
\author[]{Nils Prigge}
\address{Stockholms universitet\\ 
Matematiska institutionen\\106 91 Stockholm}
\email{nils.prigge@gmail.com}
\begin{document}
\begin{abstract}
We study the Gelfand-Fuks cohomology of smooth vector fields on $\bS^d$ relative to $\mathrm{SO}(d+1)$ following a method by Haefliger that uses tools from rational homotopy theory. In particular, we show that $H^*(\B\SO(4);\bR)$ injects into the relative Gelfand-Fuks cohomology which corrects a claim by Haefliger. Moreover, for $\bS^3$ the relative Gelfand-Fuks cohomology agrees with the smooth cohomology of $\Diff^+(\bS^3)$ and we provide a computation in low degrees.
\end{abstract}
\maketitle
\section{Introduction}
 Let $M$ be a smooth manifold $M$ and $G$ a Lie group acting smoothly and effectively on $M$. We denote by $\mathcal{L}_{M}$ the topological Lie algebra of smooth vector fields on $M$ and identify the Lie algebra of $G$ as a Lie subalgebra $\mathfrak{g}\subset \mathcal{L}_M$ via the action. In this note, we study the continuous Lie algebra cohomology of $\mathcal{L}_{M}$ relative to $\mathfrak{g}$ as introduced by Gelfand-Fuks \cite{GF69}. More precisely, consider the differential, graded commutative algebra of \emph{continuous} multilinear alternating forms $C^*(\mathcal{L}_{M})$ with values in $\bR$, where the differential of a $k$-form $f$ is defined by 
 \[df(v_0,\hdots,v_k)=\sum_{0\leq i<j\leq k}(-1)^{i+j}f([v_i,v_j],v_0,\hdots,\hat{v}_i,\hdots,\hat{v}_j,\hdots,v_k)\in C^{k+1}(\mathcal{L}_M)\]
 for vector fields $v_i\in \mathcal{L}_M$. We denote by $C^*(\mathcal{L}_M;G)\subset C^*(\mathcal{L}_M)$ the subcomplex of $G$-basic cochains, i.e.\,those differential forms which are $G$-invariant and vanish if one of the $v_i$ belongs to $\mathfrak{g}$. For a compact and connected group $G$ the real cohomology of the classifying space $\B G$ agrees with the algebra of $G$-invariant polynomials with respect to the adjoint action$S(\mathfrak{g}^{\vee})^{G}$ by classical Chern-Weil theory and there is a map $S(\mathfrak{g}^{\vee})^{G}\rightarrow C^*(\mathcal{L}_M;G)$ determined by a $G$-connection in $C^*(\mathcal{L}_M)$ (see \cite[Exp.\ 20]{SC49} and \cite[Sect.\ 4.5]{GS99}) which induces a map on cohomology
 \begin{equation}\label{map}
  H^*(\B G;\bR)\longrightarrow H^*(\mathcal{L}_M;G)
 \end{equation}
 that does not depend on the choice of connection. In our main result, we study \eqref{map} for $M=\bS^d$ with respect to the rotation action of $\SO(d+1)$ and correct a statement of Haefliger \cite{Hae78}.
  \begin{thm}\label{Haefliger} Let $M=\bS^d$ and $G=\SO(d+1)$, if $d=3$ the map \eqref{map} is injective. If $d=2n$ is even the kernel of \eqref{map} consists of all polynomials in the Pontrjagin classes $p_1,\hdots,p_{n}$ of degree $>2d$. 
 \end{thm}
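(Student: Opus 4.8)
The plan is to compute $H^*(\mathcal{L}_{\bS^d};\SO(d+1))$ by running Haefliger's rational-homotopy-theoretic machinery \cite{Hae78} in the homogeneous-space case $\bS^d=\SO(d+1)/\SO(d)$, where $T\bS^d$ is the associated bundle $\SO(d+1)\times_{\SO(d)}\bR^d$. The first task is to set up the model. The local input is the continuous cohomology of formal vector fields on $\bR^d$ relative to $\mathfrak{so}_d$, computed by the truncated Weil algebra $WO_d$: before truncation this is the free graded-commutative algebra on the Pontrjagin classes $p_1,\dots,p_{\lfloor d/2\rfloor}$ (and, for $d$ even, an Euler class $e$ with $e^2=p_{d/2}$) together with odd ``Vey'' generators $h_1,h_3,\dots$ with $dh_{2i-1}=p_i$, and one divides out the ideal generated by all monomials in the Pontrjagin/Euler classes of cohomological degree $>2d$ --- the same truncation that governs Bott vanishing. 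Globalizing over $\bS^d$ and passing to $\SO(d+1)$-basic cochains, Haefliger's theorem identifies $H^*(\mathcal{L}_{\bS^d};\SO(d+1))$ with the cohomology of an explicit CDGA $\mathcal{W}_d$ --- a relative truncated Weil algebra that combines the Weil algebra of $\SO(d+1)$, the data of the inclusion $\SO(d)\subset\SO(d+1)$ (equivalently of the fibration $\bS^d\to\B\SO(d)\to\B\SO(d+1)$), and the truncated $WO_d$-factor coming from the formal vector fields. Under this identification the map \eqref{map} is the one induced by the inclusion of the polynomial subalgebra $H^*(\B\SO(d+1);\bR)$, so the theorem becomes the problem of deciding which classes of $H^*(\B\SO(d+1);\bR)$ become exact in $\mathcal{W}_d$.

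For $d=2n$ we have $H^*(\B\SO(2n+1);\bR)=\bR[p_1,\dots,p_n]$ (no Euler class, as $2n+1$ is odd), and I would prove the two inclusions of the kernel separately. For ``$\supseteq$'', a polynomial in the $p_i$ of degree $>2d=4n$ should be shown to lie in (the image of) the truncation ideal of $\mathcal{W}_{2n}$, hence to be a differential of an explicit combination of Vey generators with polynomial coefficients; this is the incarnation in $\mathcal{W}_{2n}$ of the degree-$2d$ truncation of $WO_d$, and it can be matched with Bott vanishing for the rank-$2n$ bundle $T\bS^{2n}$ because $H^*(\B\SO(2n+1);\bR)\hookrightarrow H^*(\B\SO(2n);\bR)$ is injective. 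For ``$\subseteq$'' --- equivalently injectivity of \eqref{map} in degrees $\le 2d$ --- one must rule out any further relation: I would filter $\mathcal{W}_{2n}$ by the number of Vey generators (or run the Serre-type spectral sequence of the underlying section fibration) and verify that the only way to produce a nonzero element of $H^*(\B\SO(2n+1);\bR)\cdot 1$ as a differential is to reproduce a relation of degree $>2d$ already found above, so that below degree $2d$ nothing becomes exact. Combining the two gives the stated kernel.

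For $d=3$ we have $H^*(\B\SO(4);\bR)=\bR[p_1,e]$ with $\deg p_1=\deg e=4$ and $p_2=e^2$, and the assertion --- with no truncation at all --- is that \eqref{map} is injective. The even-dimensional argument fails here because $H^*(\B\SO(4);\bR)\to H^*(\B\SO(3);\bR)$ is \emph{not} injective (its kernel is the ideal generated by the Euler class), so there is no Bott-type truncation for the $\SO(4)$-classes to inherit --- and in fact there is none. Concretely, $T\bS^3$ is parallelizable, so $p_i(T\bS^3)=0$ and the Vey generators of the globalized $WO_3$-factor become cocycles in $\mathcal{W}_3$; together with $\SO(4)\simeq_{\bQ}\bS^3\times\bS^3$ and $\SO(3)\simeq_{\bQ}\bS^3$, this lets $\mathcal{W}_3$ carry the full polynomial algebra $\bR[p_1,e]$ with no differential reaching a nonzero element of it, whence injectivity. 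Conceptually this is the rational-homotopy shadow of Hatcher's theorem $\Diff^+(\bS^3)\simeq\SO(4)$: the same computation identifies $H^*(\mathcal{L}_{\bS^3};\SO(4))$ with the smooth cohomology $H^*_{\mathrm{sm}}(\Diff^+(\bS^3))=H^*_{\mathrm{sm}}(\SO(4))=H^*(\B\SO(4);\bR)$, so \eqref{map} is even an isomorphism; this is precisely where Haefliger's statement must be corrected, since his argument predicted relations of degree $>2d$ among the monomials in $p_1$ and $e$ that do not actually hold.

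The step I expect to be the main obstacle is the injectivity of \eqref{map} in degrees $\le 2d$ in the even case --- showing that the truncation relations are the \emph{only} relations among the Pontrjagin classes. This requires a genuine analysis of the differential of $\mathcal{W}_{2n}$ (or of the associated spectral sequence), carefully tracking the interplay of the transgressions of the $\SO(d+1)$-action, the Vey generators of $WO_d$, and the dimension-$d$ truncation; it is exactly where Haefliger's original reasoning was too quick, and the same subtlety accounts for why $d=3$ behaves differently from what a naive extrapolation of the even case would predict.
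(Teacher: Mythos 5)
Your overall framework (Haefliger's identification of relative Gelfand--Fuks cohomology with the cohomology of a section space $\Gamma_{\bS^d}\hcoker\SO(d+1)$, computed via relative Sullivan models built from a truncated characteristic-class algebra) is the right one, and your treatment of the even case has the same shape as the paper's: polynomials of degree $>2d$ die because they already die in $E_{\bS^d}\hcoker\SO(d+1)$ (the truncation), and this is transported to the section space through the evaluation map together with a Leray--Hirsch argument for the projection $(\Gamma_{\bS^d}\times\bS^d)\hcoker\SO(d+1)\to\Gamma_{\bS^d}\hcoker\SO(d+1)$; injectivity below degree $2d$ holds because the odd generators of the fiber model live in degrees $>2d$ and the fiber-integrated generators $\bar z_i$ are closed under the length-zero part of the differential. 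You leave both steps at the level of a plan, but the route is sound.

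The $d=3$ case, however, contains a genuine error. You argue that since $T\bS^3$ is trivial the Vey generators become cocycles, so that ``no differential reaches a nonzero element of $\bR[e,p_1]$,'' and you conclude that \eqref{map} is an isomorphism by invoking Hatcher's theorem. Both claims are false. In the relative model over $\B\SO(4)$ the universal Pontrjagin classes do not vanish, and the truncation relation of $WU_3$ forces a degree-$7$ generator $z_1$ with $D(z_1)=p_1^2$ in the model of $E_{\bS^3}\hcoker\SO(4)$, so the differential very much does reach $p_1^2$. The entire content of the theorem for $d=3$ is that in the induced model of the section space this differential acquires a correction term from integration over the fiber $\bS^3$, namely $\overline D(z_1)=p_1^2-e\bar z_1$ with $\bar z_1$ a new closed degree-$4$ generator; hence $p_1^2$ is not exact but becomes equal to $e\bar z_1$ in cohomology, and one must still verify (by splitting off the subcomplex generated by $z_1,\bar z_1$ over $\bR[e,p_1]$) that no higher-length part of the differential kills anything in $\bR[e,p_1]$. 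Your reasoning, taken at face value, would equally ``prove'' Haefliger's original incorrect claim that $p_1^2=0$, or your claim that nothing happens at all; neither survives contact with the actual differential. Moreover, the conclusion that \eqref{map} is an isomorphism contradicts the computation: already $H^4(\mathcal{L}_{\bS^3};\SO(4))$ is $6$-dimensional (there are four classes $\bar x_1,\dots,\bar x_4$ beyond $e$ and $p_1$), so the smooth cohomology of $\Diff_+(\bS^3)$ is strictly larger than $H^*(\B\SO(4);\bR)$. Hatcher's theorem gives a homotopy equivalence of topological groups, but van Est/smooth cohomology is not invariant under such equivalences, so it cannot be used to reduce to $\SO(4)$.
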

  \begin{rem}
 The strategy for the proof is entirely due to Haefliger and we are merely carrying out the program he outlined in \cite{Hae78}, where he also claims that for $M=\bS^d$ and $G=\SO(d+1)$ the kernel of \eqref{map} consists of the all polynomials in the Pontrjagin classes $p_1,\hdots,p_{\lfloor d/2 \rfloor}$ of degree $>2d$ regardless of the parity of $d$. This contradicts recent results of Nariman \cite{Nar23} and \cite{Pri24I} as we explain below, but since Haefliger does not give detailed computations for the kernel of \eqref{map} we do not know the origin of the mistake in \cite{Hae78}. 
 \end{rem}

In the proof of Theorem \ref{Haefliger} we actually prove the following more general statement.
\begin{thm}\label{MainComputation}
 For $d=3$ there exists a class $\bar{z}_1\in H^4 (\mathcal{L}_{\bS^{3}};\SO(4))$ and there is an injection 
 \[ \bR[e,p_1,\bar{z}_1]/\left(p_1^2-e\bar{z}_i\right) \longrightarrow H^* (\mathcal{L}_{\bS^{3}};\SO(4)). \]
\end{thm}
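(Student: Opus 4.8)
The plan is to carry out the program of Haefliger \cite{Hae78}: convert the computation of $H^*(\mathcal{L}_{\bS^{3}};\SO(4))$ into a question of rational homotopy theory, write down the resulting commutative differential graded algebra through low degrees, read off the classes $e,p_1,\bar z_1$ together with the relation $p_1^2=e\bar z_1$, and then establish injectivity by inverting the Euler class. Since $\SO(4)$ acts transitively on $\bS^{3}$ with isotropy $\SO(3)$, Haefliger's reduction identifies $H^*(\mathcal{L}_{\bS^{3}};\SO(4))$ with the cohomology of a finite-type commutative differential graded algebra $(\mathcal A,d)$ over $\bR$, which is a relative minimal Sullivan algebra over $(\bR[p_1,e],0)=\bigl(S(\mathfrak{so}(4)^{\vee})^{\SO(4)},0\bigr)$ --- the base providing the image of the Chern--Weil map \eqref{map} --- whose fibre models the relative Gelfand--Fuks cohomology of formal vector fields on $\bR^{3}$ relative to $\mathfrak{so}(3)$, the assembly over $\bS^{3}=\SO(4)/\SO(3)$ being controlled by an $\SO(4)$-connection. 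A direct computation of that fibre shows that, through degree $8$, the generators one adjoins to $\bR[p_1,e]$ are a closed class $\bar z_1$ in degree $4$ and a class $w$ in degree $7$.

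By construction $e,p_1\in\mathcal A$ are the degree-$4$ cocycles representing the images under \eqref{map} of the polynomial generators of $H^*(\B\SO(4);\bR)=\bR[p_1,e]$; write $\bar z_1\in H^{4}(\mathcal{L}_{\bS^{3}};\SO(4))$ for the class of the new degree-$4$ generator. The point at which Haefliger's account must be corrected is the differential of the degree-$7$ generator: one computes $dw=p_1^{2}-e\bar z_1$ --- rather than $dw=p_1^{2}$ --- so that $p_1^{2}=e\bar z_1$ in $H^*(\mathcal A)$. Restricting attention to these generators gives a map of commutative differential graded algebras
\[
\bigl(\bR[e,p_1,\bar z_1]\otimes\Lambda(w),\ dw=p_1^{2}-e\bar z_1\bigr)\ \longrightarrow\ (\mathcal A,d).
\]
Because $p_1^{2}-e\bar z_1$ is a nonzerodivisor in the polynomial ring $\bR[e,p_1,\bar z_1]$, the Koszul complex on the left has cohomology exactly $R:=\bR[e,p_1,\bar z_1]/(p_1^{2}-e\bar z_1)$, and we obtain the algebra map $R\to H^*(\mathcal{L}_{\bS^{3}};\SO(4))$ of the theorem.

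It remains to prove this map is injective, and for that it suffices to invert $e$: localisation is exact, so the map localises to $R[e^{-1}]\to H^*(\mathcal A[e^{-1}])$, and as $R$ is a domain with $e\neq 0$ the map $R\to R[e^{-1}]$ is injective; by naturality of localisation, injectivity of $R[e^{-1}]\to H^*(\mathcal A[e^{-1}])$ therefore forces that of $R\to H^*(\mathcal A)$. Now $R[e^{-1}]=\bR[e^{\pm1},p_1]$ after eliminating $\bar z_1=e^{-1}p_1^{2}$, and in $\mathcal A[e^{-1}]$ the generators $\bar z_1$ and $w$ drop out (the latter becoming acyclic); one then checks from the explicit model that no localised generator of higher degree imposes a polynomial relation on $p_1$, so that $\bR[e^{\pm1},p_1]$ injects into $H^*(\mathcal A[e^{-1}])$. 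Finally $\bR[p_1,e]$ injects into $R$ (the composite $\bR[p_1,e]\to R\to R[e^{-1}]=\bR[e^{\pm1},p_1]$ is the localisation at $e$ of the domain $\bR[p_1,e]$, hence injective), so the composite $\bR[p_1,e]=H^*(\B\SO(4);\bR)\to R\to H^*(\mathcal{L}_{\bS^{3}};\SO(4))$ is \eqref{map} and is injective; this is Theorem \ref{Haefliger} for $d=3$.

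The crux --- and the step where \cite{Hae78} goes wrong --- is assembling Haefliger's model correctly and, inside it, pinning down the differential on the degree-$7$ part of the formal-vector-fields factor: one must show that the coefficient of $e\bar z_1$ in $dw$ is non-zero (Haefliger effectively takes $dw=p_1^{2}$) and that the higher-degree generators impose no further relations among monomials in $e,p_1,\bar z_1$ --- equivalently, that the localised model $\mathcal A[e^{-1}]$ is controlled. That the extra term $e\bar z_1$ must be present is consistent with, and could alternatively be detected from, the non-vanishing of $p_1^{2}$ in this cohomology established by Nariman \cite{Nar23} and \cite{Pri24I}.
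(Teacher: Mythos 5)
Your overall architecture is the right one (Haefliger's reduction to a relative Sullivan model over $B_4=\bR[e,p_1]$, the corrected differential $dw=p_1^2-e\bar z_1$ on the degree-$7$ generator, and the Koszul subalgebra computing $R=\bR[e,p_1,\bar z_1]/(p_1^2-e\bar z_1)$), and your idea of proving injectivity by inverting $e$ is genuinely different from the paper's argument, which instead exhibits $B_4\otimes\Lambda(z_1,\bar z_1)$ as a subcomplex of the full model and shows the resulting short exact sequence of complexes splits. The localisation route can in fact be completed and is arguably cleaner: once one knows that the differential of the section-space model satisfies $\overline D(z_i)=D(z_i)-e\bar z_i$ and $\overline D(\bar z_i)=-\Theta(D(z_i))$ for \emph{every} generator (the paper's Corollary to Proposition \ref{ModelEv}), the triangular change of generators $\bar z_i\mapsto \bar z_i-e^{-1}D(z_i)$ over $B_4[e^{-1}]$ turns all pairs $(z_i,\bar z_i)$ into contractible pairs, so $H^*(\mathcal A)[e^{-1}]=\bR[e^{\pm1},p_1]$ and your argument closes. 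This would bypass the paper's construction of the special model with $D_0|_{V_2}=0$ and the syzygy discussion entirely.

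However, as written the proposal has two genuine gaps, and they sit exactly where the work is. First, the formula $dw=p_1^2-e\bar z_1$ --- and, for the localisation step, the presence of the term $-e\bar z_i$ in $\overline D(z_i)$ for \emph{all} $i$ --- is asserted via ``a direct computation of that fibre shows\dots'' and ``one computes\dots''. This is not a fibrewise computation: the $-e\bar z_i$ terms arise from modelling the $\SO(4)$-equivariant evaluation map $(\bS^3\times\Gamma_{\bS^3})\hcoker\SO(4)\to E_{\bS^3}\hcoker\SO(4)$ and rigidifying it to the standard form $z_i\mapsto 1\otimes z_i+s\otimes\bar z_i$ (Proposition \ref{ModelEv} and Lemma \ref{technical} in the paper, resting on Berglund's models for mapping spaces); without that input your model is not pinned down. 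Relatedly, your description of the model is inaccurate: through degree $8$ there are many more generators than $\bar z_1$ and $w$ (e.g.\ four closed classes in degree $4$ and seven generators in degree $7$), and $\mathcal C_{CE}(L_3)$ has infinitely many generators since $F_3$ is rationally a wedge of $17$ spheres. Second, the decisive step ``one then checks from the explicit model that no localised generator of higher degree imposes a polynomial relation on $p_1$'' is precisely the statement that needs proof; it concerns infinitely many generators and cannot be verified degree by degree, so it requires a structural argument --- either the contractible-pairs argument above (which needs the global form of $\overline D$) or the paper's splitting argument (which needs the model of Corollary \ref{auxilliary} and the fact that the ideal $(p_1^2)\subset B_3$ is principal, hence has no higher syzygies). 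Until one of these is supplied, the injectivity claim is unsupported.
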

Theorem \ref{Haefliger} and Theorem \ref{MainComputation} have implications for the \emph{smooth cohomology} $H_{\mathrm{sm}}^*(\Diff_+(\bS^d);\bR)$, which can be defined as the cohomology of the subcomplex consisting of continuous cochains
\[\left(\bigoplus_{\geq 0 }\mathrm{Map}(\Diff_+(\bS^d)^k,\bR),\delta\right)\subset \left(\bigoplus_{k\geq 0}\mathrm{Hom}_{\mathrm{Set}}(\Diff_+(\bS^d)^k,\bR),\delta\right)
\]
of the ordinary bar complex that computes the group cohomology of $\Diff_+(\bS^d)$ with coefficients in $\bR$. By \cite[pg.\ 43]{Hae79} there is a Van-Est type isomorphism 
\[H^*_{\mathrm{sm}}(\Diff_+(\bS^d);\bR)\cong H^* (\mathcal{L}_{\bS^d};\SO(d+1))\]
for $d=1,2,3$ and in general there is a conjecture due to Bott \cite[pg.\ 15]{Bo77} which implies that there is a map
	\begin{equation}\label{VanEst}
	H^*_{\mathrm{sm}}(\Diff_+(\bS^d);\bR)\longrightarrow H^* (\mathcal{L}_{\bS^d};\SO(d+1))
	\end{equation}
so that the following diagram commutes
\begin{equation}\label{Map}
 \begin{tikzcd}[ampersand replacement=\&]
H^*(\B\Diff_+(\bS^d);\bR)\arrow{r}\arrow{d} \& H^*_{\mathrm{sm}}(\Diff_+(\bS^d);\bR) \arrow{r}\arrow{d} \& H^*(\B\Diff^{\delta}_+(\bS^d);\bR)\\
H^*(\B\SO(d+1);\bR)\arrow{r} \& H^* (\mathcal{L}_{\bS^d};\SO(d+1))
 \end{tikzcd}
\end{equation}
Here the composition of the top horizontal maps is the map on cohomology induced by $\B\Diff_+^{\delta}(\bS^d)\to \B\Diff_+(\bS^d)$, where $\B\Diff_+^{\delta}(\bS^d)$ is the classifying space of the group of diffeomorphisms with the discrete topology whose cohomology agrees with the group cohomology. There is a map $\B\Diff_+(\bS^d)\to \B\mathrm{Top}_+(d+1)$ so that one can pull back, if $d=2n+1$ is odd, lifts of the Euler and Pontrjagin classes $\bR[e,p_1,\hdots,p_n]\subset H^*(\B\Diff_+(\bS^{d});\bR)$ which we denote the same. Below we summarize a few results in the literature and consequences from Theorem \ref{Haefliger} about \eqref{Map}.
\begin{itemize}
 \item[(i)] For $d=1$ Haefliger has computed in \cite{Hae73} the relative Gelfand-Fuks cohomology $H^*(\mathcal{L}_{\bS^1};\SO(2))=\bR[e,gv]/(e\cdot gv)$, where $gv$ is the so-called Godbillon-Vey class which is of degree $2$ (in particular, this implies that \eqref{map} is injective for $d=1$). Moreover, Morita has shown in \cite{Mor84} that 
 \[H_{\mathrm{sm}}^*(\Diff_+(\bS^1);\bR)\to H^*(\B\Diff_+^{\delta}(\bS^1);\bR)\]
 is injective, so in particular $\bR[e]\hookrightarrow H^*(\B\Diff_+^{\delta}(\bS^1);\bR)$.
 \item[(ii)] For $d=3$ it follows from the Van-Est isomorphism \eqref{VanEst} and Theorem \ref{MainComputation} that there is an injection 
 \[\bR[e,p_1,\bar{z}_1]/(p_1^2-e\bar{x}_1)\longrightarrow H^*_{\mathrm{sm}}(\Diff_+(\bS^3);\bR).\]
 Combined with Nariman's result that $0\neq p_1^2 \in H^8(\B\Diff_+^{\delta}(\bS^3);\bR)$, it was pointed out by Morita that $\bar{x}_1$ is non-trivial in group cohomology which is interesting as it is a continuously varying cohomology class. 
 
 An extension of Nariman's result in fact shows  
 \[\bR[e,p_1]\hookrightarrow H^*(\B\Diff_+^{\delta}(\bS^3);\bR)\] 
 (see \cite{Pri24I}) which implies Theorem \ref{Haefliger} for $d=3$. This statement and Nariman's previous result contradict Haefliger claim \cite{Hae78} that $p_1^2$ vanishes in $H^*(\mathcal{L}_{\bS^3};\SO(4))$, which was the original motivation for this work.
 \item[(iii)] If Theorem \ref{Haefliger} holds more generally for odd $d$ as the author expects, then Bott's conjecture implies that $\bR[e,p_1,\hdots,p_{n}]\subset H^*(\B\Diff_+(\bS^{2n+1});\bR)$ injects into $H^*_{\mathrm{sm}}(\Diff_+(\bS^{2n+1});\bR)$. The main result of \cite{Nar23} is that \[\bR[e,p_1,\hdots,p_{n}]\to H^*(\B\Diff^{\delta}_+(\bS^{2n+1});\bR)\] is non-trivial on the monomials in the Euler and Pontrjagin classes, which implies that they are non-trivial in $H^*_{\mathrm{sm}}(\Diff_+(\bS^{2n+1});\bR)$.
\end{itemize}
\smallskip
These results beg the question about the map 
\[\bR[e,p_1,\hdots,p_{n}]\longrightarrow H^*(\B\Diff_+^{\delta}(\bS^{2n+1});\bR)\]
and suggest that it is injective.
\section{Haefliger's Strategy}
It was conjectured independently by Bott and Fuks and later proved by Haefliger and Bott-Segal that Gelfand-Fuks cohomology $H^*(\mathcal{L}_M)$ can be computed as the cohomology of the following section space: Let $F_d$ be the restriction of the canonical $\text{U}(d)$-bundle over $\mathrm{B}\text{U}(d)$ to the $2d$-skeleton with respect to the CW decomposition of the complex Grassmannian by Schubert cells. For an $d$-dimensional manifold $M$ the associated bundle
\[\pi_M:\text{Fr}^+(TM)\times_{\text{SO}(d)} F_d\longrightarrow M\]
is a fiber bundle whose total space we denote by $E_M$ and we denote by $\Gamma_M$ its space of sections. 
\begin{thm}[\cite{Hae76,BS77}]
 $\mathcal{C}(\mathcal{L}_M)$ is a cdga model for $\Gamma_M$.
\end{thm}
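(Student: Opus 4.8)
The plan is to reduce the statement to the case $M=\bR^d$ by a sheaf-theoretic localization argument, to carry out the local computation there, and then to globalize. The first and by far hardest step is \emph{localization to the diagonal}. Since $M$ is compact, $\mathcal{L}_M=\Gamma(M;TM)$ carries the $C^\infty$ topology, so a continuous $k$-cochain on $\mathcal{L}_M$ is a compactly supported distributional section over $M^k$ of an appropriate tensor bundle; because the Lie bracket is a bidifferential operator and $d$ is a local operator, the subcomplex $C^*_{\mathrm{loc}}(\mathcal{L}_M)$ of cochains supported on the thin diagonal $\Delta\subset M^k$ --- equivalently, those depending only on the infinite jet of the $v_i$ along $\Delta$ --- is preserved by the differential. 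I would prove that $C^*_{\mathrm{loc}}(\mathcal{L}_M)\hookrightarrow C^*(\mathcal{L}_M)$ is a quasi-isomorphism by a partition-of-unity and averaging argument exhibiting the off-diagonal part as acyclic (the key being that supports of vector fields may be shrunk freely). Moreover $C^*_{\mathrm{loc}}(\mathcal{L}_U)$ is functorial in opens $U\subseteq M$ and defines a complex of sheaves of cdgas $\underline{C}^*$ on $M$, so with a little more standard sheaf theory one obtains $C^*(\mathcal{L}_M)\simeq R\Gamma(M;\underline{C}^*)$; in particular $C^*(\mathcal{L}_M)$ is the homotopy limit over the \v{C}ech category of a good cover $\{U_\alpha\}$ of the cdgas $C^*_{\mathrm{loc}}(\mathcal{L}_{U_{\alpha_0}\cap\cdots\cap U_{\alpha_p}})$. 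This is the analytic heart of the theorem --- essentially all of the work --- and is due to Gelfand-Fuks for $M$ open in $\bR^d$ and to Bott-Segal in general.

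Second, the local model: for $M=\bR^d$ the associated bundle is trivial, so $\Gamma(E_{\bR^d})\simeq F_d$, and pulling back along the surjective Taylor-expansion homomorphism $\mathcal{L}_{\bR^d}\twoheadrightarrow W_d$ onto the formal vector fields, together with contractibility of $\bR^d$, yields a quasi-isomorphism $C^*(W_d)\xrightarrow{\sim}C^*_{\mathrm{loc}}(\mathcal{L}_{\bR^d})$. The Gelfand-Fuks computation of $H^*(W_d)$ then presents $C^*(W_d)$ as a finite-dimensional cdga (a truncated Weil algebra) which, by the very construction of $F_d$ as the restriction of the canonical $\U(d)$-bundle over the $2d$-skeleton of $\B\U(d)$, is a cdga model for $F_d$. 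The point I would be careful about here is that this identification is natural for the $\SO(d)$-action after reducing the structure group, which is exactly what makes the local pieces assemble into the \emph{associated} bundle $\mathrm{Fr}^+(TM)\times_{\SO(d)}F_d$ rather than a product.

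Finally, globalization. By the second step applied chart-by-chart together with naturality, the sheaf $U\mapsto C^*_{\mathrm{loc}}(\mathcal{L}_U)$ is quasi-isomorphic to the sheaf $U\mapsto A_{\mathrm{PL}}(\Gamma(E_M|_U))$ of polynomial de Rham cdgas of the local section spaces. Taking $R\Gamma$ over $M$, the left-hand side recovers $C^*(\mathcal{L}_M)$ by the first step, while the right-hand side --- since $\Gamma(E_M)$ is reconstructed from its restrictions to a good cover by homotopy-limit descent and, modulo a convergence/nilpotence argument, $A_{\mathrm{PL}}$ carries this homotopy limit of spaces to the homotopy limit of the associated cdgas --- gives a Sullivan model for $\Gamma_M$. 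Composing the equivalences yields $C^*(\mathcal{L}_M)\simeq A_{\mathrm{PL}}(\Gamma_M)$, as claimed. I expect the sole genuine obstacle to be the diagonal localization of the first step; the equivariance in the second step and the convergence issue at the end of the third are the remaining points that require care but are otherwise formal.
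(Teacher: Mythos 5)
This theorem is quoted in the paper from Haefliger and Bott--Segal without proof, so I am measuring your outline against the arguments in those references --- and the first step, which you correctly identify as carrying all the weight, is where your proposal breaks. The subcomplex of cochains whose distributional kernels are supported on the thin diagonal (the \emph{diagonal} complex of Gelfand--Fuks) is indeed preserved by the differential, but its inclusion into $C^*(\mathcal{L}_M)$ is \emph{not} a quasi-isomorphism, and no partition-of-unity or support-shrinking argument will make the off-diagonal part acyclic. Test it on $M=\bS^1$: there $F_1\simeq S^3$, the bundle is trivial, so $\Gamma_{\bS^1}\simeq \mathrm{Map}(\bS^1,S^3)$ and $H^*(\mathcal{L}_{\bS^1})\cong \Lambda(x_3)\otimes\bR[\bar{x}_2]$ is one-dimensional in every degree $\geq 2$ (Gelfand--Fuks' original computation). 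The diagonal cohomology, by contrast, is the hypercohomology over $\bS^1$ of a complex of sheaves with finite-dimensional fibre cohomology $H^*(W_1)\cong H^*(S^3)$, hence vanishes above degree $4$; the inclusion already fails to be surjective on $H^6$. The classes you lose are exactly those represented by products of local cocycles based at distinct points, whose kernels live on fat diagonals of $M^k$ and cannot be pushed onto $\Delta$. The same objection sinks your globalization: $R\Gamma(M;\underline{C}^*)$ for a complex of sheaves \emph{on $M$} with fibre $C^*(W_d)$ is always a (twisted) Leray--Hirsch-type answer $H^*(M)\otimes H^*(F_d)$, finite-dimensional, whereas $H^*(\Gamma_M)$ is generically infinite-dimensional.

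The localization that actually works is more delicate and is precisely where the section space, rather than sheaf cohomology on $M$, comes from. Gelfand--Fuks filter $C^*(\mathcal{L}_M)$ by the number of points in the support of the kernel, so the associated graded involves local data on all powers $M^p$ (configuration spaces), not just on $M$. Equivalently, Bott--Segal and Haefliger prove descent not for a single cover of $M$ but over the category of opens diffeomorphic to finite \emph{disjoint unions} of balls; the essential analytic input is the K\"unneth identification $C^*(\mathcal{L}_{U\sqcup V})\cong C^*(\mathcal{L}_U)\,\widehat{\otimes}\,C^*(\mathcal{L}_V)$ --- which records exactly the off-diagonal supports you discard --- together with $C^*(W_d)\xrightarrow{\sim} C^*(\mathcal{L}_{\bR^d})$ and $H^*(W_d)\cong H^*(F_d)$. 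The homotopy Kan extension of this monoidal functor is what reproduces $A_{\mathrm{PL}}$ of a section space. Your steps 2 and 3 (the local computation via $W_d$, the $\SO(d)$-equivariance assembling the associated bundle, and the convergence issue for $A_{\mathrm{PL}}$ of a homotopy limit) are sound, but they sit downstream of a localization statement that, as you have formulated it, is false.
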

Later, Haefliger extended this result for relative Gelfand-Fuks cohomology. Namely, for a manifold $M$ with a smooth $G$ action of a compact Lie group, $\pi_M\colon E_M\to M$ is a $G$-equivariant fibre bundle and hence $\Gamma_M$ has a $G$-action by conjugation. 
\begin{thm}[{\cite[Thm 1']{Hae78}}]\label{RelativeGFModel}
The complex of continuous and $G$-basic Chevalley-Eilenberg cochains $\mathcal{C}_{CE}(\mathcal{L}_M;G)$ is a cdga model for $\Gamma_M \hcoker G$.  
\end{thm}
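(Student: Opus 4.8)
The plan is to treat $\mathcal{C}(\mathcal{L}_M)$ as a $\mathfrak{g}$-differential graded algebra modelling the section space $\Gamma_M$ $G$-equivariantly, and then to extract the $G$-basic subcomplex by Cartan's theorem on differential algebras admitting a connection. Through the inclusion $\mathfrak{g}\subset\mathcal{L}_M$ the Chevalley--Eilenberg algebra $\mathcal{C}(\mathcal{L}_M)$ carries contractions $\iota_X$ (insertion of $X\in\mathfrak{g}$ into the first argument) and Lie derivatives $L_X=\iota_X d+d\iota_X$, making it a $\mathfrak{g}$-dg-algebra, and the push-forward action of $G\subset\Diff(M)$ on $\mathcal{L}_M$ induces a compatible $G$-action integrating the $\mathfrak{g}$-action. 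By the definition recalled in the introduction, the complex $\mathcal{C}_{CE}(\mathcal{L}_M;G)$ of $G$-basic cochains is exactly the basic subcomplex $\mathcal{C}(\mathcal{L}_M)_{\mathrm{bas}}=\{f\in\mathcal{C}(\mathcal{L}_M)^{G}:\iota_X f=0\text{ for all }X\in\mathfrak{g}\}$ of this $\mathfrak{g}$-dg-algebra. (We may assume $G$ connected; the general case follows by also taking $\pi_0(G)$-invariants.)

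The decisive structural point is that $\mathcal{C}(\mathcal{L}_M)$ is of type $(C)$, i.e.\ admits a connection. Indeed $\mathcal{C}^1(\mathcal{L}_M)$ is the continuous dual $\mathcal{L}_M^{\vee}$ and $\mathfrak{g}$ is a finite-dimensional, hence topologically complemented, subspace of the Fr\'echet space $\mathcal{L}_M$; averaging an arbitrary continuous linear projection $\mathcal{L}_M\to\mathfrak{g}$ over the Haar measure of the compact group $G$ produces a $G$-equivariant one, whose transpose is a $G$-equivariant section $\theta\colon\mathfrak{g}^{\vee}\to\mathcal{C}^1(\mathcal{L}_M)$ of the restriction map, so that $\iota_X\theta(a)=\langle a,X\rangle$ for all $X\in\mathfrak{g}$. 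This feature is precisely what fails for $\Omega^{*}(M)$ itself, and it is why the naive basic complex of $\mathcal{L}_M$ --- rather than a Weil enlargement of it --- already computes the homotopy quotient. Given a connection, Cartan's theorem (see \cite[Exp.\ 20]{SC49} and \cite[Sect.\ 4.5]{GS99}) shows that the augmentation $W\mathfrak{g}\to\bR$ induces a quasi-isomorphism $(\mathcal{C}(\mathcal{L}_M)\otimes W\mathfrak{g})_{\mathrm{bas}}\xrightarrow{\ \sim\ }\mathcal{C}(\mathcal{L}_M)_{\mathrm{bas}}$, the source being the Weil model of the $\mathfrak{g}$-dg-algebra $\mathcal{C}(\mathcal{L}_M)$.

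It remains to identify this Weil model with a cdga model of $\Gamma_M\hcoker G$, for which I would upgrade the Haefliger--Bott--Segal theorem to an equivariant statement. The cdga quasi-isomorphism exhibiting $\mathcal{C}(\mathcal{L}_M)$ as a model of $\Gamma_M$ is natural for diffeomorphisms of $M$, so it carries the push-forward action on the source to the conjugation action on $\Gamma_M$; after the usual rigidification of comparison maps --- legitimate because $G$ is compact and we work over $\bR$ --- it can be promoted to an equivariant quasi-isomorphism of $\mathfrak{g}$-dg-algebras. Concretely, one may run the argument through the bar construction $B_{\bullet}(\ast,G,\Gamma_M)$, whose realisation is $\Gamma_M\hcoker G$: applying the natural equivalence degreewise, together with a Chevalley--Eilenberg model of $G$, and totalising produces a cdga quasi-isomorphic to $(\mathcal{C}(\mathcal{L}_M)\otimes W\mathfrak{g})_{\mathrm{bas}}$, while the equivariant de Rham theorem for compact $G$ identifies the cohomology of the latter with $H^{*}(\Gamma_M\hcoker G;\bR)$. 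Splicing these maps into a single zigzag of cdga quasi-isomorphisms --- and not merely an isomorphism on cohomology --- yields that $\mathcal{C}_{CE}(\mathcal{L}_M;G)$ is a cdga model for $\Gamma_M\hcoker G$.

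The one genuine obstacle I anticipate is this equivariant refinement of Haefliger--Bott--Segal: one must check that their comparison --- assembled from a simplicial scheme on $M$ and from spaces of continuous cochains --- is natural enough in the diffeomorphisms of $M$ to be promoted, up to a zigzag, to a strictly $G$-equivariant morphism of $\mathfrak{g}$-dg-algebras, so that the geometric conjugation action really matches the algebraic action of $\mathfrak{g}\subset\mathcal{L}_M$ by Lie derivatives. Once that naturality is in hand, the remaining ingredients --- existence of a connection by averaging, Cartan's theorem for type $(C)$ algebras, and the equivariant de Rham theorem --- are entirely standard.
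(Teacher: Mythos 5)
First, a point of order: the paper does not prove this statement --- it is imported as Haefliger's Th\'eor\`eme~1$'$ from \cite{Hae78} with no argument given --- so there is no internal proof to compare against, and what follows is an assessment of your sketch on its own terms. Your overall strategy is the standard one (and is essentially Haefliger's), and the two preparatory steps are correct. The $\mathfrak{g}$-dg-algebra structure on $\mathcal{C}(\mathcal{L}_M)$ is routine, and your construction of a connection --- extend the identity of $\mathfrak{g}$ to a continuous projection $\mathcal{L}_M\to\mathfrak{g}$ by Hahn--Banach and average over the compact group $G$ --- is exactly the mechanism behind the Chern--Weil map \eqref{map} that the introduction alludes to; Cartan's theorem for $\mathfrak{g}$-dg-algebras of type $(C)$ then correctly identifies the basic subcomplex $\mathcal{C}_{CE}(\mathcal{L}_M;G)$ with the Weil model $(\mathcal{C}(\mathcal{L}_M)\otimes W\mathfrak{g})_{\mathrm{bas}}$ up to quasi-isomorphism.

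The gap is the step you flag yourself and then do not carry out: promoting the Haefliger/Bott--Segal equivalence between $\mathcal{C}(\mathcal{L}_M)$ and a cdga model of $\Gamma_M$ to an equivalence of $\mathfrak{g}$-dg-algebras, and then identifying the resulting Weil model with a cdga model of the Borel construction. This is the entire content of the theorem beyond the nonequivariant case, and it cannot be dispatched by asserting that ``rigidification of comparison maps is legitimate because $G$ is compact and we work over $\bR$'': the Haefliger/Bott--Segal comparison is a long zigzag assembled from a simplicial resolution of $M$ and local formal-vector-field computations, and strictifying a zigzag of quasi-isomorphisms between infinite-dimensional topological cdgas to a strictly $G$-equivariant one is precisely where the work lies --- one must exhibit each intermediate object as a $\mathfrak{g}$-dg-algebra and each map as equivariant, or justify the strictification in an appropriate homotopy category of $G$-cdgas. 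Likewise, the final identification of $(\mathcal{C}(\mathcal{L}_M)\otimes W\mathfrak{g})_{\mathrm{bas}}$ with a model for the realisation of the bar construction is asserted via ``the equivariant de Rham theorem,'' but that theorem as usually stated yields an isomorphism of cohomology rings, whereas a ``cdga model'' requires a zigzag of cdga quasi-isomorphisms (via Bott--Shulman--Stasheff or similar), which you do not supply. In short: the reduction is sound and the two standard ingredients are handled correctly, but the heart of the theorem --- the equivariant refinement of the nonequivariant model --- remains unproved in your write-up.
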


Haefliger then used models from rational homotopy theory for section spaces to compute Gelfand-Fuks cohomology. Moreover, he described how to obtain a rational model for $\Gamma_M\hcoker G\to \B G$ in terms of relative Sullivan models of $E_M\hcoker G$ and $M\hcoker G$ over $\B G$, together with a model for the evaluation map
\[\mathrm{ev}:(M\times \Gamma_M)\hcoker G\longrightarrow E_M\hcoker G.\]
We study this model of $\Gamma_M\hcoker G$ for a sphere together with the action of $\SO(d+1)$ given by rotations using recent results from rational homotopy theory. Denote by $B_d$ the cohomology ring $H^*(\B\SO(d);\bQ)$ and define $B_{d+1}$-algebras
\[ A_d:=\begin{cases}
         B_{d+1}[e]/(e^2-p_{d/2}) & \text{if } d\equiv 0 (2)\\
         (B_{d+1}\otimes \Lambda(s),d=e\partial\slash\partial s) & \text{if } d\equiv 1(2)
        \end{cases}
\]
\begin{lem}
 A model for $\bS^d\hcoker \mathrm{SO}(d+1)\to \B\SO(d+1)$ is given by $B_{d+1}\to A_d$.
\end{lem}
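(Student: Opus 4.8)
The plan is to identify the homotopy quotient geometrically and then invoke the classical rational models of sphere bundles. Since $\SO(d+1)$ acts transitively on $\bS^d$ with point stabiliser $\SO(d)$, the Borel construction $\bS^d\hcoker\SO(d+1)$ is the total space of the unit sphere bundle $S(\gamma_{d+1})$ of the universal oriented rank-$(d+1)$ bundle $\gamma_{d+1}\to\B\SO(d+1)$ — a model of $\B\SO(d)$ — and the map in the statement is the bundle projection $\pi$. As $\B\SO(d+1)$ is formal and $B_{d+1}=H^*(\B\SO(d+1);\bQ)$ is polynomial (on the Pontrjagin classes, together with the Euler class when $d+1$ is even), $(B_{d+1},0)$ is a Sullivan model for the base, so the task reduces to writing down a relative Sullivan model of $S(\gamma_{d+1})\to\B\SO(d+1)$.

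For $d=2n+1$ odd I would use that an $\bS^{2n+1}$-bundle $S(\xi)\to B$ has relative Sullivan model $\bigl(\Lambda V_B\otimes\Lambda(s_{2n+1}),\,Ds=e(\xi)\bigr)$, the odd generator $s$ realising the Gysin sequence for cup product with $e(\xi)$. With $B=\B\SO(d+1)$ and $\xi=\gamma_{d+1}$ (even rank $2n+2$, so $e(\xi)=e$) this is exactly $\bigl(B_{d+1}\otimes\Lambda(s),\,Ds=e\bigr)=A_d$, and the odd case is finished.

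For $d=2n$ even, $\gamma_{d+1}$ has odd rank $2n+1$ and so rationally trivial Euler class; pulling it back to $S(\gamma_{d+1})$ and splitting off the tautological line gives $\pi^*\gamma_{d+1}\cong\underline{\bR}\oplus\eta$ with $\eta$ an oriented rank-$2n$ bundle restricting to $T\bS^{2n}$ on each fibre. I would set $x:=e(\eta)$, so that $x^2=p_n(\eta)=\pi^*p_{d/2}$ (using $e^2=p_{\mathrm{top}}$ for oriented even-rank bundles) and $x$ restricts to $\chi(\bS^{2n})$ times the fibre fundamental class, hence is nonzero there; the relative Sullivan model of the bundle is then $\bigl(B_{d+1}\otimes\Lambda(x_{2n},y_{4n-1}),\,Dx=0,\,Dy=x^2-p_{d/2}\bigr)$. (One can also sidestep the sphere-bundle model: the underlying algebra of any relative model over $B_{d+1}$ is $B_{d+1}\otimes\Lambda(x_{2n},y_{4n-1})$; since $|Dx|=2n+1$ is odd and $B_{d+1}$ lives in even degrees one gets $Dx=0$, and after $x\mapsto x+\tfrac12\alpha$ for suitable $\alpha\in B_{d+1}^{2n}$ one may take $Dy=x^2-\gamma$ with $\gamma\in B_{d+1}^{4n}$, after which comparison with $e(\eta)^2=\pi^*p_{d/2}$ and freeness of $H^*(S(\gamma_{d+1}))$ over $H^*(\B\SO(d+1))$ on $1$ and $[x]$ pins $\gamma$ down as $p_{d/2}$ up to a positive scalar absorbed by rescaling.) The last step is then to read this algebra as the Koszul complex on $x^2-p_{d/2}\in B_{d+1}[x]$; since that element is a nonzerodivisor in the polynomial ring $B_{d+1}[x]$, its cohomology is $B_{d+1}[x]/(x^2-p_{d/2})$ concentrated in $y$-degree $0$, and the projection $x\mapsto e$, $y\mapsto0$ is a quasi-isomorphism of $B_{d+1}$-algebras onto $A_d=B_{d+1}[e]/(e^2-p_{d/2})$.

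I expect the only genuinely delicate step to be identifying the characteristic class in the differential of the relative model in the even case — concretely, verifying that the coefficient of $p_{d/2}$ is nonzero, i.e.\ that $x^2$ is not already a polynomial in $p_1,\dots,p_{n-1}$ on $S(\gamma_{d+1})$; everything else is the formality of $\B\SO(d+1)$ and a one-line Koszul argument.
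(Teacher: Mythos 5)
Your proof is correct and is essentially the paper's argument: both identify $\bS^d\hcoker\SO(d+1)$ with $\B\SO(d)$ over $\B\SO(d+1)$ (you via the universal sphere bundle $S(\gamma_{d+1})$, the paper via the equivariant inclusion of the north pole) and then read off the relative Sullivan model from the known map $H^*(\B\SO(d+1);\bQ)\to H^*(\B\SO(d);\bQ)$, i.e.\ $e\mapsto 0$ for $d$ odd and $p_{d/2}\mapsto e^2$ for $d$ even, using formality of these classifying spaces. Your extra Koszul/nonzerodivisor step in the even case just makes explicit the quasi-isomorphism from the genuine relative Sullivan algebra onto the quotient cdga $A_d$, which the paper leaves implicit.
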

\begin{proof}
 Denote by $N\in \bS^d$ the north pole with isotropy group $\SO(d)$. The inclusion $N\hookrightarrow \bS^d$ is equivariant with respect to the inclusion $\SO(d)\hookrightarrow \SO(d+1)$ and induces a homotopy equivalence $*\hcoker \SO(d)\xrightarrow{\simeq} S^d\hcoker \SO(d+1)$. The statement follows from computing a relative Sullivan model of $\B\SO(d)\to\B\SO(d+1)$ which is determined by the induced map on cohomology.
\end{proof}
Similarly, the inclusion of the fibre $\pi_{\bS^d}^{-1}(N)\cong F_d $ into the total space is equivariant with respect to the inclusion $\SO(d)\hookrightarrow \SO(d+1)$ and it follows from the induced map of fibre sequences
\begin{equation*}
	\begin{tikzcd}[ampersand replacement=\&]
	F_d \arrow{r} \arrow[equal]{d} \& F_d\hcoker \SO(d)\arrow{d}\arrow{r} \&N\hcoker \SO(d)=\B\SO(d)\arrow{d}{\simeq}\\
	F_d \arrow{r} \&E_{\bS^d}\hcoker \SO(d+1) \arrow{r} \&\bS^d\hcoker \SO(d+1)
	\end{tikzcd}
\end{equation*}
that the middle map is an equivalence. Therefore, we can obtain a relative model of $E_{\bS^d}\hcoker \SO(d+1)$ from $F_d\hcoker \SO(d)$. A cdga model of $F_d$ is given in \cite{Hae78} and can be obtained by pulling back the model of $\mathrm{E}\U(d)\to \B\U(d)$ given by the Koszul resolution 
\[\bQ[c_1,\hdots,c_d]\longrightarrow (\bQ[c_1,\hdots,c_d]\otimes \Lambda(h_1,\hdots,h_d),d(h_i)=c_i)\]
along the inclusion of $\mathrm{sk}_{2d}\B\U(d)$ which gives
\begin{equation}\label{WU_d}
WU_d:= \left(\frac{\bQ[c_1,\hdots ,c_d]}{(f,\,|f|>2d)}\otimes \Lambda(h_1,\hdots,h_d),d(h_i)=c_i\right).
\end{equation}
From an analogous argument, we obtain a relative model $F_d\hcoker \SO(d)$ which is also stated in \cite[pg.\ 153]{Hae78}.
\begin{lem}\label{ModelFn}
	A relative cdga model of $F_d\hcoker \SO(d)$ is given by 
	\[\left(B_d\otimes WU_d,\tilde{d}(h_i)=c_i-(-1)^{i/2} p_{i/2}\right).\]
\end{lem}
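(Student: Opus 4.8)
The plan is to realize $F_d\hcoker\SO(d)\to\B\SO(d)$ as a homotopy pullback and read off the model by base change. The $\SO(d)$-action on $F_d=\mathrm{E}\U(d)|_{\mathrm{sk}_{2d}\B\U(d)}$ is the restriction along $\SO(d)\hookrightarrow\U(d)$ of the principal $\U(d)$-action, so from the general identity $X\hcoker H\simeq(X\hcoker G)\times_{\B G}\B H$ (for $H\leq G$ acting on $X$ through $G$, e.g.\ by comparing fibres of $X\hcoker H\to X\hcoker G$ and $\B H\to\B G$) together with $F_d\hcoker\U(d)\simeq F_d/\U(d)=\mathrm{sk}_{2d}\B\U(d)$ one obtains a homotopy pullback
\[
\begin{tikzcd}[ampersand replacement=\&]
F_d\hcoker\SO(d)\arrow{r}\arrow{d} \& \mathrm{sk}_{2d}\B\U(d)\arrow{d}\\
\B\SO(d)\arrow{r} \& \B\U(d)
\end{tikzcd}
\]
in which the right vertical map is the skeletal inclusion $\iota$ (the classifying map of the restricted bundle $F_d\to F_d/\U(d)$), the bottom map is the complexification map $r$ of the universal oriented $\bR^d$-bundle, and the left vertical map is the one to be modelled. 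Equivalently, $F_d\hcoker\SO(d)$ is the restriction along $\iota$ of $\mathrm{E}\U(d)\times_{\SO(d)}\mathrm{E}\SO(d)\simeq\B\SO(d)$, regarded over $\B\U(d)$ through $r$.

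Next I would build the relative cdga model of $\iota$ whose fibre model is $WU_d$. Since $WU_d$ models the homotopy fibre $F_d$ of $\iota$, such a model has underlying algebra $\bQ[c_1,\hdots,c_d]\otimes WU_d$, with $c_i$ the base generators and $\bar c_i\in WU_d$ the fibre generators, and a differential $\tilde D$ which restricts to the differential of $WU_d$ modulo the base ideal; I claim $\tilde D(h_i)=\bar c_i-c_i$ works. Indeed $\tilde D^2=0$, and the linear change of variables $e_i:=\bar c_i-c_i$ splits this cdga as the tensor product of the acyclic Koszul complex $(\bQ[e_\bullet]\otimes\Lambda(h_\bullet),\,h_i\mapsto e_i)$ with the ring $\bQ[\bar c_\bullet]/(f,\,|f|>2d)$; hence its cohomology is $\bQ[\bar c_\bullet]/(f,\,|f|>2d)\cong H^*(\mathrm{sk}_{2d}\B\U(d);\bQ)$ with $\bQ[c_\bullet]$ acting by $c_i\mapsto\bar c_i$, i.e.\ through $\iota^*$, while quotienting out the base ideal returns the fibre model $WU_d$.

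Then I would base change this model along $r$, that is, apply the ring map $\bQ[c_\bullet]\to B_d$ sending $c_i\mapsto r^*(c_i)$. As $r$ classifies a complexification we have $r^*(c_{2k})=(-1)^k p_k$ and $r^*(c_{2k+1})=0$, so the base-changed relative model of the pullback is $(B_d\otimes WU_d,\tilde d)$ with $\tilde d(h_i)=\bar c_i-r^*(c_i)$; renaming $\bar c_i$ back to $c_i$, this is $\tilde d(h_{2k})=c_{2k}-(-1)^k p_k$ and $\tilde d(h_{2k+1})=c_{2k+1}$, the claimed formula (reading $p_j=0$ for $j\notin\bZ_{>0}$).

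The step I expect to be the main obstacle is justifying that this base change actually computes a model of the homotopy pullback: $(\bQ[c_\bullet]\otimes WU_d,\tilde D)$ is not a relative Sullivan algebra over $\bQ[c_\bullet]$---the $\bar c_i$ carry the relations of degree $>2d$---so the pullback theorem for relative Sullivan models does not apply verbatim. One repairs this by noting that $\bQ[c_\bullet]\otimes WU_d$ is free, hence flat, as a graded $\bQ[c_\bullet]$-module and is quasi-isomorphic over $\bQ[c_\bullet]$ to an honest relative Sullivan model of $\iota$; tensoring that quasi-isomorphism with $B_d$ remains a quasi-isomorphism, both sides being bounded-below complexes of flat $\bQ[c_\bullet]$-modules, so $(B_d\otimes WU_d,\tilde d)$ does model the pullback over $\B\SO(d)$. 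The remaining inputs---that $WU_d$ is a cdga model of $F_d$, that $\mathrm{sk}_{2d}\B\U(d)$ is modelled by $\bQ[c_\bullet]/(f,\,|f|>2d)$, and the formula for $r^*$---are standard and already invoked above.
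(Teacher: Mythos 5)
Your argument is correct in substance and rests on the same homotopy pullback square
\[
F_d\hcoker\SO(d)\;\simeq\;\B\SO(d)\times^{h}_{\B\U(d)}\mathrm{sk}_{2d}\B\U(d)
\]
as the paper, but you run the base change through it in the opposite direction, and that choice is exactly what creates the obstacle you flag at the end. The paper takes the leg $\B\SO(d)\to\B\U(d)$, which admits a genuine relative Sullivan (Koszul-type) model $E_d=(B_d\otimes U_d\otimes\Lambda(h_1,\dots,h_d),\,d(h_k)=c_k-(-1)^{k/2}p_{k/2})$ over $U_d=H^*(\B\U(d))$, and base changes along the skeletal inclusion; formality of the latter lets one replace it by the surjection $U_d\to U_d/(f,\,|f|>2d)$, and the tensor product $U_d/(f,\,|f|>2d)\otimes_{U_d}E_d$ is literally the asserted cdga, so \cite[Prop.\,15.8]{FHT} applies with no repair needed. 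You instead model the skeletal inclusion by the non-Sullivan relative algebra $(\bQ[c_\bullet]\otimes WU_d,\tilde D)$ and base change along $r^*$, which is why you must then argue that tensoring with $B_d$ preserves the comparison quasi-isomorphism with an honest relative Sullivan model. Your repair reaches the right conclusion, but the lemma as you state it (``quasi-isomorphisms of bounded-below complexes of flat modules are preserved by tensoring'') is not quite the applicable one: for cochain complexes with differential of degree $+1$, bounded-below-ness points the wrong way for the usual induction on cycles of an acyclic complex of flats, so flatness alone does not suffice. What actually makes the step work is that $(\bQ[c_\bullet]\otimes WU_d,\tilde D)$ is semifree as a dg $\bQ[c_\bullet]$-module, via the finite filtration by word length in the $h_i$ (the differential strictly lowers $h$-length and the associated graded is free with zero differential), and quasi-isomorphisms between semifree dg modules are preserved by $-\otimes_{\bQ[c_\bullet]}B_d$. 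With that substitution your proof is complete; the trade-off is that the paper's choice of direction buys a one-line base change with no such technical point, while yours keeps the fibre model $WU_d$ visible throughout.
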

\begin{proof}
	By definition, $\SO(d)$ acts freely on $F_d$ and hence $F_d\hcoker \SO(d)\simeq F_d/\SO(d)$. The projection $F_d/\SO(d)\rightarrow F_d/\U(d)=\text{sk}_{2n}\mathrm{B}\U(d)$ is a $\U(d)/\SO(d)$-bundle and pulled back from $\U(d)/\SO(d)\hookrightarrow\mathrm{B}\SO(d)\rightarrow \mathrm{B}\U(d)$ via the inclusion of the $2n$-skeleton. Rationally, both $\mathrm{B}\SO(d)$ and $ \mathrm{B}\U(d)$ are products of Eilenberg-MacLane whose minimal models coincide with their cohomology rings. Hence, a relative Sullivan model of $i:\mathrm{B}\SO(d)\rightarrow \mathrm{B}\U(d)$ is determined from the induced map on cohomology and given by $E_d:=(B_d\otimes H^*(\mathrm{B}\U(d))\otimes \Lambda(h_1,\hdots,h_d),d(h_k)=c_k-(-1)^{k/2} p_{k/2})$ as $i^*(c_{k})=(-1)^{k/2} p_{k/2}$ if $k$ is even and zero if $k$ is odd. A model for the pullback over the $2d$-skeleton, i.e.\,$F_d/\SO(d)$, is determined by a cdga model of the inclusion $\text{sk}_{2d}\mathrm{B}\U(d)\hookrightarrow \mathrm{B}\U(d)$ via base change by \cite[Prop.\,15.8]{FHT}. Since the inclusion is $(2d+1)$-connected (as there are no cells of odd degree), a minimal model of $\text{sk}_{2d}\mathrm{B}\U(d)$ has generators corresponding to the Chern classes and additional generators of degrees $\geq 2d+1$ and therefore there is a quasi-isomorphism to $H^*(\text{sk}_{2d}\mathrm{B}\U(d))$ by projection onto the Chern classes. It follows that the inclusion of the $2d$-skeleton is formal and thus a model of $F_d/\SO(d)$ is given by 
	\begin{align*}
		\frac{\bQ[c_1,\hdots,c_d]}{(f(c_1,\hdots,c_d),\, |f|>2d)}\otimes_{H^*(\mathrm{B}\U(d))}E_d \cong \left(B_d\otimes WU_d,\tilde{d}(h_i)=c_i-(-1)^{i/2} p_{i/2}\right),
	\end{align*}
	which is isomorphic to the model that Haefliger gives in \cite[Sect.\,7]{Hae78}.
\end{proof}
\begin{cor}\label{corEn}
	The fibre bundle $E_{\bS ^d}\rightarrow \bS^d$ is fibrewise rationally equivalent to the trivial fibration $\pi_2:F_d\times \bS^d\rightarrow \bS^d$
\end{cor}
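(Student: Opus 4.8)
The plan is to reduce the statement to a manipulation of relative Sullivan models: from Lemma~\ref{ModelFn} I would write down an explicit relative model of $E_{\bS^d}\to\bS^d$ over a minimal model of $\bS^d$, and then a change of generators will identify it with the model of the product $F_d\times\bS^d\to\bS^d$.

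First I would realise $E_{\bS^d}\to\bS^d$ as a pullback. The fibre of $\B\SO(d)\to\B\SO(d+1)$ is $\SO(d+1)/\SO(d)=\bS^d$, and the fibre inclusion $\tau\colon\bS^d\hookrightarrow\B\SO(d)$ classifies the principal $\SO(d)$-bundle $\SO(d+1)\to\bS^d$, i.e.\ the oriented frame bundle $\mathrm{Fr}^+(T\bS^d)$. Under the equivalence $\bS^d\hcoker\SO(d+1)\simeq\B\SO(d)$ from the proof of the preceding lemma, together with the equivalence of fibre bundles in the diagram before Lemma~\ref{ModelFn}, pulling the fibration $E_{\bS^d}\hcoker\SO(d+1)\to\bS^d\hcoker\SO(d+1)$ back along $\tau$ recovers exactly $\mathrm{Fr}^+(T\bS^d)\times_{\SO(d)}F_d=E_{\bS^d}\to\bS^d$. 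Hence, by base change of relative Sullivan models \cite[Prop.~15.8]{FHT} and Lemma~\ref{ModelFn}, a relative model of $E_{\bS^d}\to\bS^d$ is
\[\left(M_d\otimes WU_d,\ \tilde d'(h_i)=c_i-(-1)^{i/2}\tau^*(p_{i/2})\right),\]
where $M_d$ denotes a minimal Sullivan model of $\bS^d$, the map $\tau^*\colon B_d\to M_d$ is induced by $\tau$, and $p_{i/2}$ is read as $0$ when $i$ is odd, as in Lemma~\ref{ModelFn}. Since $F_d$ is simply connected (from the fibration $\U(d)\to F_d\to\mathrm{sk}_{2d}\B\U(d)$) and $\bS^d$ is nilpotent, it now suffices to produce an isomorphism over $M_d$ between this relative model and the product $M_d\otimes(WU_d,d)$, which models $F_d\times\bS^d\to\bS^d$.

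The next step is to compute $\tau^*$. Because $T\bS^d$ is stably trivial, $\tau^*p_i=0$ for all $i\ge1$. If $d=2n+1$ is odd, then $B_d=\bQ[p_1,\dots,p_n]$ has no further generators, so $\tilde d'(h_i)=c_i$ for every $i$; since moreover $M_{2n+1}=(\Lambda(x),0)$ has trivial differential, the relative model is already the product cdga and we are done. If $d=2n$ is even, write $M_{2n}=(\Lambda(x,y),dy=x^2)$ with $|x|=2n$ and $|y|=4n-1$; then $\tau^*e\in\bQ\cdot x$, hence $\tau^*p_n=\tau^*(e^2)=(\tau^*e)^2\in\bQ\cdot x^2=\bQ\cdot dy$, so $\tau^*p_n=dw$ for some $w\in\bQ\cdot y$. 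Thus $\tilde d'(h_i)=c_i$ for $i<2n$ and $\tilde d'(h_{2n})=c_{2n}-(-1)^n dw$; since $|h_{2n}|=2\cdot 2n-1=4n-1=|w|$, the change of generator $h_{2n}\mapsto h_{2n}+(-1)^n w$ (fixing $M_{2n}$ and all other generators of $WU_{2n}$) is an $M_{2n}$-algebra automorphism of $M_{2n}\otimes WU_{2n}$ carrying $\tilde d'$ to the product differential with $d(h_{2n}+(-1)^n w)=c_{2n}$. This isomorphism over $M_{2n}$ is precisely a fibrewise rational equivalence between $E_{\bS^{2n}}$ and $F_{2n}\times\bS^{2n}$ over $\bS^{2n}$, which completes the even case.

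The one genuinely delicate point is the even case. A priori one might expect $E_{\bS^{2n}}\to\bS^{2n}$ to be rationally nontrivial, since it is built from $T\bS^{2n}$, whose Euler class $\tau^*e$ is rationally nonzero. What makes the bundle split rationally is that $F_d$ is the restriction of the $\U(d)$-bundle to the $2d$-skeleton of $\B\U(d)$, which is exactly large enough to retain the class $c_d=c_{2n}$ together with its primitive $h_{2n}$; consequently the obstruction term $(-1)^n\tau^*p_n$ becomes exact once pulled into the total space, and $h_{2n}$ absorbs it — this is the content of the change of generator above. The remaining ingredients — identifying $\tau$ with the classifying map of $T\bS^d$, the legitimacy of base change for relative Sullivan models, and the simple-connectivity of $F_d$ — are all routine.
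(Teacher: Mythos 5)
Your proof is correct and follows essentially the same route as the paper: realise $E_{\bS^d}$ as the pullback of $F_d\hcoker\SO(d)\to\B\SO(d)$ along the classifying map of $T\bS^d$, base change the model of Lemma~\ref{ModelFn}, and use that the Pontrjagin classes of $\bS^d$ vanish. The only difference is that the paper works with the cohomology ring $H^*(\bS^d)$ as the base model (invoking formality of the classifying map), so that $\tau^*p_{d/2}=(\tau^*e)^2=0$ on the nose and no change of generators is needed, whereas your choice of a minimal model makes the even case require the (correct) extra step of absorbing the exact term into $h_{2n}$.
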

\begin{proof}
	By construction, $E_{\bS^d}$ is the pullback of $F_d\hcoker \SO(d)\rightarrow \mathrm{B}\SO(d)$ along the classifying map of the tangent bundle $\tau_M:\bS^d\rightarrow \mathrm{B}\SO(d)$ which is a formal map. Hence, a relative Sullivan model of $E_{\bS ^n}\rightarrow \bS^d$ is given by 
	\[H^*(\bS^d)\otimes_{B_d}\left(B_d\otimes WU_d,\tilde{d}(h_i)=c_i-(-1)^{i/2} p_{i/2}\right)\]
	by Lemma \ref{ModelFn}. This cdga is isomorphic to $H^*(\bS^d)\otimes WU_d$ as the total Pontrjagin class of $\bS^d$ is trivial, which proves the claim.
\end{proof}
We can determine a relative Sullivan model of $F_d\hcoker \SO(d)$ from Proposition \ref{ModelFn} and consequently of $E_{\bS^d}\hcoker \SO(d+1)$. The content of the next proposition is that, given relative Sullivan models of $E_{\bS^d}\hcoker \SO(d+1)$ and $\bS^d\hcoker \SO(d+1)$, there exists a relative Sullivan model of $\Gamma_{\bS^d}\hcoker \SO(d+1)$ so that the evaluation map 
\[(\mathrm{ev}:\bS^d\times \Gamma_{\bS^d})\hcoker \SO(d+1)\longrightarrow E_{\bS^d}\hcoker \SO(d+1)\]
has a particularly simple form which in fact \emph{determines} the relative Sullivan model of $\Gamma_{\bS^d}\hcoker \SO(d+1)$.

\smallskip
We first introduce some terminology. It follows from \cite{GF70} and unpublished work of Vey (see \cite{Go74}) that $F_d$ is $2d$-connected and has the rational homotopy type of a bouquet of spheres, and we denote by $L_d$ a minimal dg Lie model. Let $\mathcal{C}_{CE}(L_d)=(\Lambda V_d,d)$ denote the Chevalley-Eilenberg complex where $V_d=sL_d^{\vee}$ is the graded dual of $L_d$ which is concentrated in degrees $V_d=V_d^{\geq 2n+1}$. By Corollary \ref{corEn} the space of sections is rationally equivalent to $\mathrm{Map}(\bS^d,F_d)$ whose dg Lie model is $H^*(\bS^d)\otimes L_d)$ by \cite{Ber15}. Then Chevalley-Eilenberg cochains are given by $\cC_{CE}(H^*(\bS^d)\otimes L_d)=(\Lambda (V_d\oplus \overline{V}_d),D)$ where $\overline{V}_d^*=V_d^{*-d}$. Sometimes, it will be convenient to pick a basis of $V_d$ so that $\cC_{CE}(L_d)=(\Lambda z_i ,d)$, which also gives a preferred choice of generators of $\cC_{CE}(H^*(\bS^d)\otimes L_d)=(\Lambda (z_i,\bar{z_i}),D)$ where $|\bar{z}_i|=|z_i|-d$.

\begin{prop}\label{ModelEv}
 Given a relative Sullivan model $(B_{d+1}\otimes \cC_{CE}(L_d),D)$ of $E_{\bS^d}\hcoker\SO(d+1) $, there is a relative Sullivan model of $\Gamma_{\bS^d}\hcoker \SO(d+1)\to \B\SO(d+1)$ of the form \[(B_{d+1}\otimes \Lambda(z_i,\bar{z_i}),\bar{D})\] so that the evaluation map is modelled by 
 \begin{align}\label{ModelEvaluation}
 \begin{split}
 \mathrm{ev}: (A_d\otimes \Lambda z_i,D) &\longrightarrow A_d\otimes_{B_{d+1}}(B_{d+1}\otimes \Lambda(z_i,\bar{z_i}),\bar{D})\\
  z_i&\longmapsto 1\otimes z_i+s\otimes \bar{z_i}
 \end{split}
 \end{align}
 \end{prop}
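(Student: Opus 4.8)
The plan is to specialize Haefliger's construction of a relative model for the fibrewise section space $\Gamma_M\hcoker G\to\B G$, together with its evaluation map (as recalled above; see \cite{Hae78}), to $M=\bS^d$ and $G=\SO(d+1)$, exploiting the very simple shape of the base model $A_d$. Recall the general recipe in the relative setting: if $\mathcal{B}$ is a finite cdga model of $M\hcoker G$ over $B:=H^*(\B G;\bQ)$ which is \emph{free and finitely generated as a $B$-module}, with homogeneous basis $\{b_\alpha\}$ and $B$-linear dual basis $\{b^\alpha\}$ (so $|b^\alpha|=-|b_\alpha|$), and if $(\mathcal{B}\otimes\Lambda W,D)$ is a relative Sullivan model of $E_M\hcoker G$ over $M\hcoker G$, then $\Gamma_M\hcoker G\to\B G$ admits a relative Sullivan model over $B$ whose generators are the elements $w\otimes b^\alpha$ of degree $|w|-|b_\alpha|$, carrying the unique differential $\bar{D}$ for which
\[
\mathrm{ev}^\sharp\colon(\mathcal{B}\otimes\Lambda W,D)\longrightarrow\mathcal{B}\otimes_B\bigl(B\otimes\Lambda(w\otimes b^\alpha),\bar{D}\bigr),\qquad w\longmapsto\sum_\alpha b_\alpha\otimes(w\otimes b^\alpha),
\]
is a morphism of relative Sullivan algebras over $B$; and $\mathrm{ev}^\sharp$ then models the evaluation map. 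I would first record that the hypotheses of rational homotopy theory are met here: by Corollary \ref{corEn} the fibre $\Gamma_{\bS^d}$ is rationally $\mathrm{Map}(\bS^d,F_d)$, which is nonempty, of finite type, and simply connected, since $F_d$ is $2d$-connected and $d\geq1$.

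The substantive input is then the base model. By the preceding Lemma I take $B_{d+1}\to A_d$ as the model of $\bS^d\hcoker\SO(d+1)$, and the key observation is that $A_d$ is free of rank $2$ over $B_{d+1}$ (and finite, so it is an admissible input even for $d$ even, where it is not itself a Sullivan algebra): for $d$ odd, $A_d=B_{d+1}\otimes\Lambda(s)$ has $B_{d+1}$-basis $\{1,s\}$, and for $d$ even, $A_d=B_{d+1}[e]/(e^2-p_{d/2})$ has $B_{d+1}$-basis $\{1,e\}$. Writing $\sigma$ for the degree-$d$ basis element in either case, $\{1,\sigma\}$ is a homogeneous $B_{d+1}$-basis of $A_d$ with dual basis $\{\mathbf{1},\bar{\sigma}\}$, $|\mathbf{1}|=0$, $|\bar{\sigma}|=-d$. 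Feeding into the recipe the given relative Sullivan model of $E_{\bS^d}\hcoker\SO(d+1)$, which over $A_d$ has the form $(A_d\otimes\cC_{CE}(L_d),D)$, and choosing the preferred generators $z_i$ of $\cC_{CE}(L_d)$, the recipe produces generators $z_i\otimes\mathbf{1}$ and $z_i\otimes\bar{\sigma}$ of degrees $|z_i|$ and $|z_i|-d$; renaming these $z_i$ and $\bar{z_i}$ recovers exactly the generators of $\cC_{CE}(H^*(\bS^d)\otimes L_d)$ fixed above (recall $|\bar{z_i}|=|z_i|-d$). Hence the section-space model has underlying algebra $B_{d+1}\otimes\Lambda(z_i,\bar{z_i})$ with a differential $\bar{D}$, and the evaluation morphism sends $z_i\mapsto 1\otimes(z_i\otimes\mathbf{1})+\sigma\otimes(z_i\otimes\bar{\sigma})=1\otimes z_i+s\otimes\bar{z_i}$, which is precisely \eqref{ModelEvaluation} (with $s$ denoting $\sigma$).

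The step I expect to be the main obstacle is verifying that this abstract construction really goes through without modification — in Haefliger's general recipe the differential $\bar{D}$ need not be well-defined on $\Lambda(w\otimes b^\alpha)$ unless one first passes to a quotient, so one must check that here no such quotient is needed. This is exactly where the rank-$2$ shape of $A_d$ does the work: its multiplication over $B_{d+1}$ is as simple as possible — $1\cdot1=1$, $1\cdot\sigma=\sigma$, and $\sigma^2=0$ for $d$ odd or $\sigma^2=p_{d/2}$ for $d$ even — so expanding $D(1\otimes z_i+\sigma\otimes\bar{z_i})$ inside $A_d\otimes_{B_{d+1}}(B_{d+1}\otimes\Lambda(z_i,\bar{z_i}))$ and comparing the coefficients of $1$ and of $\sigma$ with $\mathrm{ev}^\sharp(Dz_i)$ yields explicit $B_{d+1}$-polynomial formulas for $\bar{D}z_i$ and $\bar{D}\bar{z_i}$ lying in $\Lambda(z_i,\bar{z_i})$, with no obstruction; that $\bar{D}$ squares to zero follows from $D^2=0$. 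Granting this, the identification of the resulting relative Sullivan algebra with a model of $\Gamma_{\bS^d}\hcoker\SO(d+1)\to\B\SO(d+1)$, and of $\mathrm{ev}^\sharp$ with the evaluation map, is Haefliger's theorem applied to the fibrewise section space.
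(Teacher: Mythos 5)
Your route is genuinely different from the paper's, and it has a gap at its central step. You propose to \emph{construct} the model of $\Gamma_{\bS^d}\hcoker\SO(d+1)$ by a relative Brown--Szczarba/Haefliger recipe: take the free $B_{d+1}$-algebra on $W\otimes A_d^{\vee}$ and equip it with the unique differential making $\mathrm{ev}^{\sharp}$ a chain map. The determination of $\bar{D}$ from this requirement is fine (it is exactly the content of the Corollary following the proposition, and uniqueness does follow from $A_d$ being free of rank $2$ over $B_{d+1}$). What is not justified is the final sentence: that the resulting relative Sullivan algebra \emph{is} a model of the fibrewise section space over $\B\SO(d+1)$ and that $\mathrm{ev}^{\sharp}$ models the evaluation map. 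You defer this to ``Haefliger's theorem applied to the fibrewise section space,'' but the citable results are the non-equivariant section-space models (and, here, Berglund's theorems identifying the Lie model of $\mathrm{Map}(\bS^d,F_d)$ and of its evaluation map). The relative statement over $B_{d+1}$ in the precise form you need is exactly what the proposition asserts, so as written the argument is close to circular. A second, related gap: for $d$ even, $A_d=B_{d+1}[e]/(e^2-p_{d/2})$ is not a relative Sullivan algebra over $B_{d+1}$, and your claim that it is nonetheless ``an admissible input'' is not free --- homotopy-theoretic manipulations of cdga maps out of non-Sullivan sources require care, which is why the paper passes to the auxiliary Sullivan model $A_d'=(B_{d+1}\otimes\Lambda(e,y),\,dy=e^2-p_{d/2})$ in that case.

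The paper closes precisely these gaps by arguing in the opposite direction: it starts from an \emph{abstract} relative Sullivan model $(B_{d+1}\otimes\mathcal{C}_{CE}(H^*(\bS^d)\otimes L_d),\overline{D})$ of the section space (whose existence is standard, the fibre being identified by Corollary \ref{corEn} and \cite{Ber15}), notes that the equivariant evaluation map is modelled by \emph{some} map $\Psi$ over $A_d$, uses \cite[Thm 3.11]{Ber20} to identify $\Psi\otimes_{B_{d+1}}\bQ$ up to homotopy with the standard $\psi(z_i)=z_i\otimes 1+\bar z_i\otimes s$, lifts that homotopy relative to $B_{d+1}$ via Lemma \ref{technical}, and finally removes the residual terms $1\otimes a_i+s\otimes b_i$ with $a_i,b_i\in B_{d+1}^{+}\otimes\Lambda(z_i,\bar z_i)$ by the automorphism $f(z_i)=z_i+a_i$, $f(\bar z_i)=\bar z_i+b_i$ of the target. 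To repair your proof you would either need to prove the relative recipe you invoke (essentially redoing the paper's argument), or supply the normalization steps above; the explicit rank-$2$ computation you describe would then reappear, but as the Corollary deriving $\bar{D}$ from the proposition rather than as its proof.
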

 We need the following technical statement regarding relative Sullivan models for the proof.
 \begin{lem}\label{technical}
 	Let $\Psi_1:(B\otimes \Lambda V, D_V)\rightarrow (B\otimes \Lambda W,D_W)$ be a map of relative Sullivan algebras with $B^0=\bQ$. Suppose $\psi_1:=\Psi_1\otimes_B\bQ $ is homotopic to $\psi_2:(\Lambda V,d_V)\rightarrow (\Lambda W,d_W)$, then $\Psi_1$ is homotopic relative $B$ to a map $\Psi_2$ so that $\Psi_2\otimes_B\bQ=\psi_2$.
 \end{lem}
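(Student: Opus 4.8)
The plan is to deduce this from the standard lifting property of relative Sullivan algebras against trivial fibrations of cdgas, applied to a pullback that encodes ``lift the homotopy $h$ downstairs to a homotopy upstairs extending $\Psi_1$''. Write $A=(B\otimes\Lambda W,D_W)$, let $\bar A=(\Lambda W,d_W)=A\otimes_B\bQ$, and set $A_I=A\otimes\Lambda(t,dt)$, $\bar A_I=\bar A\otimes\Lambda(t,dt)$ with $|t|=0$, $|dt|=1$. Since $B^0=\bQ$ the cdga $B$ is connected and augmented, so the reductions $\pi\colon A\to\bar A$ and $\pi_I\colon A_I\to\bar A_I$ are the quotients by the ideal generated by $B^+$, and the evaluations $\epsilon_0,\epsilon_1\colon A_I\to A$ are surjective quasi-isomorphisms of $B$-cdgas compatible with $\pi$, $\pi_I$ (similarly $\bar\epsilon_0,\bar\epsilon_1\colon\bar A_I\to\bar A$). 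Fix a cdga homotopy $h\colon(\Lambda V,d_V)\to\bar A_I$ with $\bar\epsilon_0 h=\psi_1$ and $\bar\epsilon_1 h=\psi_2$.

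First I would form the pullback $Q:=A\times_{\bar A}\bar A_I$ of $\pi$ along $\bar\epsilon_0$, regarded as a $B$-cdga through its projection to $A$. Since $\Psi_1\otimes_B\bQ=\psi_1=\bar\epsilon_0 h$, pairing $\Psi_1$ with $h$ precomposed by the reduction $(B\otimes\Lambda V,D_V)\to(\Lambda V,d_V)$ defines a $B$-cdga map $(\Psi_1,h)\colon(B\otimes\Lambda V,D_V)\to Q$. Next I would check that the canonical map $\rho\colon A_I\to Q$, $\alpha\mapsto(\epsilon_0\alpha,\pi_I\alpha)$, is a surjective quasi-isomorphism of $B$-cdgas. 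Surjectivity: $\pi_I$ is onto, and $\ker\pi=B^+\!\cdot A$ lifts into $A_I$ as $t$-constants. Quasi-isomorphism: $\rho$ maps the short exact sequence $0\to B^+\!\cdot A_I\to A_I\xrightarrow{\pi_I}\bar A_I\to 0$ to $0\to B^+\!\cdot A\to Q\to\bar A_I\to 0$, inducing the identity on $\bar A_I$ and $\epsilon_0$ on the kernels; the latter is a quasi-isomorphism because $B^+\!\cdot A_I=(B^+\!\cdot A)\otimes\Lambda(t,dt)$ and $\Lambda(t,dt)\to\bQ$ is one, so $\rho$ is a quasi-isomorphism by the five lemma.

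Then I would invoke the left lifting property of the relative Sullivan algebra $B\hookrightarrow(B\otimes\Lambda V,D_V)$ against the trivial fibration $\rho$ (see \cite{FHT}), for the square with top map $B\to A\hookrightarrow A_I$ and bottom map $(\Psi_1,h)$. This yields a $B$-cdga map $H\colon(B\otimes\Lambda V,D_V)\to A_I$ with $\rho H=(\Psi_1,h)$; reading off the two components gives $\epsilon_0 H=\Psi_1$ and $\pi_I H=h$ after precomposition with the reduction, i.e.\ $H\otimes_B\bQ=h$. Setting $\Psi_2:=\epsilon_1 H$, the map $H$ is a homotopy relative $B$ from $\Psi_1$ to $\Psi_2$, and $\Psi_2\otimes_B\bQ=\bar\epsilon_1(H\otimes_B\bQ)=\bar\epsilon_1 h=\psi_2$, as desired.

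The one substantive point is the verification that $\rho$ is a quasi-isomorphism, i.e.\ that the square defining $Q$ is homotopy cartesian; this uses only that $\pi\colon A\to\bar A$ is a surjection and $\epsilon_0$ a quasi-isomorphism, so the five-lemma argument above settles it, and everything else is formal. If one wishes to avoid model-category language entirely, the same $H$ can be built by the classical inductive argument over a well-ordered homogeneous basis of $V$ as in \cite{FHT}: at each basis element the value of $H$ is determined modulo $B^+$ by $h$ and at $t=0$ by $\Psi_1$, and extending over that element is unobstructed because $\epsilon_0\colon A_I\to A$ is a surjective quasi-isomorphism --- this induction is where the work lies, the rest being bookkeeping.
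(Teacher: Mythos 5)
Your argument is correct, and it is a genuinely different proof from the one in the paper. The paper works with a \emph{linear} chain homotopy: it invokes \cite[Prop.\ 12.8]{FHT} to produce $h\colon \Lambda V\to\Lambda W$ of degree $-1$ with $\psi_2-\psi_1=d_Wh+hd_V$, extends it $B$-linearly to $h_B$, defines $\Psi_2$ on generators by $\Psi_1|_V+D_Wh_B+h_BD_V$, and writes down an explicit $\Lambda(t,dt)$-valued homotopy on generators. You instead keep the multiplicative $\Lambda(t,dt)$-homotopy $h$ downstairs and obtain the homotopy upstairs abstractly, as a lift of $(\Psi_1,h)\colon (B\otimes\Lambda V,D_V)\to A\times_{\bar A}\bar A_I$ through $\rho\colon A_I\to A\times_{\bar A}\bar A_I$, using the left lifting property of relative Sullivan algebras against surjective quasi-isomorphisms (\cite{FHT}); your five-lemma verification that $\rho$ is a surjective quasi-isomorphism is the only real computation and it is correct (both the identification of $\ker\pi_I$ with $(B^+\!\cdot A)\otimes\Lambda(t,dt)$ and the exactness of $0\to B^+\!\cdot A\to Q\to\bar A_I\to 0$ use that $\pi$ is surjective, as you note). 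What your route buys: everything after the pullback construction is formal, and in particular you never have to check that an explicitly written multiplicative extension is a chain map --- a point the paper's proof leaves implicit and which is not completely automatic, since the $h$ supplied by \cite[Prop.\ 12.8]{FHT} is only a linear chain homotopy rather than a derivation homotopy. What the paper's route buys: brevity and explicit formulas for $\Psi_2$ and the homotopy on generators, which is what gets used in the later computations. Both establish the lemma.
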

 \begin{proof}
 	There exists a map $h:\Lambda V\rightarrow \Lambda W$ of degree $-1$ so that $\psi_2-\psi_1=d_Wh+hd_V$ by \cite[Prop.\,12.8]{FHT}. Denote by $h_B:B\otimes \Lambda V\rightarrow B\otimes \Lambda W$ its $B$-linear extension and define $\Psi_2$ via its restriction by $\Psi_2|_V=\Psi_1|_V+D_Wh_B+h_BD_V$. Then $\Psi_2$ is a map of $B$-algebras and $\Psi_2\otimes_B\bQ=\psi_1+d_Wh+hd_V=\psi_2$. Moreover, define $H:(B\otimes \Lambda V, D_V)\rightarrow (B\otimes \Lambda W,D_W)\otimes\Lambda(t,dt)$ by 
 	\[H(v)=\Psi_1(v)+(\Psi_2(v)-\Psi_1(v))t-(-1)^{|v|}h_B(v)dt,
 	\]
 	which is a chain map and a homotopy $\Psi_1\sim \Psi_2$ relative $B$.
 \end{proof}
\begin{proof}[Proof of Proposition \ref{ModelEv}]
 It follows from Corollary \ref{corEn} that $\Gamma_{\bS^d}\simeq_{\bQ}\text{Map}(\bS^d,F_d)$ which is $d$-connected and has a Lie model $H^*(\bS^d)\otimes L_d$ by \cite[Thm 1.5]{Ber15}. In the following, we denote $H^*(\bS^d)=\Lambda(s)/(s^2)$. There exists a relative Sullivan model for $\Gamma_{\bS^d}\hcoker \SO(d+1)$ of the form $(B_{d+1}\otimes \mathcal{C}_{CE}(H^*(\bS^d)\otimes L_d),\overline{D})$ that extends the differential of $\mathcal{C}_{CE}(H^*(\bS^d)\otimes L_d)$. The evaluation map $(\Gamma_{\bS^d}\times \bS^d)\hcoker \SO(d+1)\rightarrow E_{\bS^d}\hcoker \SO(d+1)$ is over $\bS^d\hcoker \SO(d+1)$ and hence modeled by a map
\begin{equation}\label{evaluation}
\Psi: (A_d\otimes \mathcal{C}_{CE}(L_d),D)\longrightarrow A_d\otimes_{B_{d+1}}(B_{d+1}\otimes \mathcal{C}_{CE}(H^*(\bS^d)\otimes L_d),\overline{D})
\end{equation}
over $A_d$, where
\begin{itemize}
 \item[(i)] $A_d$ is the relative Sullivan model of $\bS^d\hcoker \SO(d+1)$ over $B_{d+1}$ defined above;
 \item[(ii)] $ (A_d\otimes \mathcal{C}_{CE}(L_d),D)$ is a relative Sullivan model for $E_{\bS^d}\hcoker \SO(d+1)\simeq F_d/\SO(d)$ from Lemma \ref{ModelFn} (and an analogue of Corollary \ref{corEn} for general $d$) by base change along $B_d\overset{\simeq}{\rightarrow} A_d$.
\end{itemize}
Since $E_{\bS^d}$ is rationally equivalent to a trivial fibration, the evaluation map is equivalent over $\mathrm{B}\SO(d+1)$ to $\text{ev}\times \pi_2:\text{Map}(\bS^d,F_d)\times \bS^d\rightarrow F_d\times \bS^d$. If we denote the generators of $\mathcal{C}_{CE}(H(\bS^d)\otimes L_d)$ by $\{z_i,\bar{z}_i\}$ where $|\bar{z}_i|=|z_i|-d$, then a model of the evaluation map of mapping spaces is given by
\begin{equation}\label{eval}
\psi:\mathcal{C}_{CE}(L_d)\rightarrow \mathcal{C}_{CE}(H(\bS^d)\otimes L_d)\otimes H(\bS^d),\quad z_i\longmapsto z_i\otimes 1+\bar{z}_i\otimes s
\end{equation}
by \cite[Thm 3.11]{Ber20}. Hence, we can assume by Lemma \ref{technical} for $d$ odd that 
\begin{equation}\label{modeleval}
\Psi(z_i)=1\otimes z_i+s\otimes \bar{z}_i+1\otimes a_i+s\otimes b_i
\end{equation}
for some $a_i,b_i\in B_{d+1}^+\otimes \Lambda(z_i,\bar{z}_i)$. The same is true for even $d$ but in order to apply Lemma \ref{technical} we have to use a relative Sullivan model $A'_d:=(B_{d+1}\otimes \Lambda(e,y),d(y)=e^2-p_{d/2})$ instead of $A_d$. Using that the projection map $A'_d\rightarrow A_d$ is a quasi-isomorphism, one can see that we also obtain \eqref{modeleval} in the case $n$ is even.

There is an algebra automorphism of $B_{d+1}\otimes \mathcal{C}_{CE}(H^*(\bS^d)\otimes L_d)$ defined by $f(z_i)=z_i+a_i$ and $f(\bar{z}_i)=\bar{z}_i+b_i$ so that $(A_d\otimes f^{-1})\circ \Psi(z_i)=1\otimes z_i+s\otimes \bar{z}_i$. Hence, we can find a model for the evaluation map \eqref{evaluation} by post-composition with $f^{-1}$ so that $\Psi=A_d\otimes \psi$.
\end{proof}
We use Proposition \ref{ModelEv} to determine the differential $\overline{D}$ in terms of the differential $D$ of the relative Sullivan model $(A_d\otimes \Lambda(z_i),D)$ of $E_{\bS^d}\hcoker \SO(d+1)\to \B\SO(d+1)$. For odd $d$ we define a $B_{d+1}$-linear derivation of $B_{d+1}\otimes \Lambda(z_i,\bar{z}_i)$ of degree $d$ by $\Theta(z_i)=\bar{z}_i$ and $\Theta(\bar{z}_i)=0$. For even $d$ there is a direct sum decomposition of dg modules
\begin{equation}\label{splitting}
A_d\otimes _{B_{d+1}}B_{d+1}\otimes \Lambda(z_i,\bar{z_i})\cong B_{d+1}\otimes \Lambda(z_i,\bar{z_i}) \oplus B_{d+1}\otimes \Lambda(z_i,\bar{z_i})[e],
\end{equation}
and for an element $x\in A_d\otimes _{B_{d+1}}B_{d+1}\otimes \Lambda(z_i,\bar{z_i})$ we denote by $x=x_1+x_e$ the decomposition with respect to this splitting.
\begin{cor}
Let $(B_{d+1}\otimes \cC(L_d),D)$ be a relative Sullivan model of $E_{\bS^d}\hcoker \SO(d+1)$ and $(B_{d+1}\otimes \cC(H^*(\bS^d)\otimes L_d),\overline{D})$ be the relative Sullivan model constructed in Proposition \ref{ModelEv}, then  
\begin{align}\label{Dbar}
\overline{D}(z_i)&=\begin{cases} 
                    D(z_i)- e\bar{z}_i & d \text{ odd}\\
                    \mathrm{ev}(D(z_i))_1 & d \text{ even}
                   \end{cases} 
&
\overline{D}(\bar{z}_i)&=\begin{cases}
                          -\Theta(D(z_i)) & d \text{ odd} \\
                          \mathrm{ev}(D(z_i))_e &  d \text{ even}
                         \end{cases}
 \end{align}
\end{cor}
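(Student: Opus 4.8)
The statement is essentially an unwinding of Proposition \ref{ModelEv}: since the evaluation map $\mathrm{ev}$ constructed there is a morphism of cdgas it is in particular a chain map, and I would obtain \eqref{Dbar} by evaluating the identity $\mathrm{ev}\circ D=\widetilde D\circ\mathrm{ev}$ on a generator $z_i$ and comparing the two sides, where $\widetilde D$ is the differential of the target $A_d\otimes_{B_{d+1}}(B_{d+1}\otimes\Lambda(z_i,\bar z_i),\overline D)$. The first step is to spell $\widetilde D$ out: as the differential of a tensor product of relative Sullivan algebras over $B_{d+1}$ it satisfies $\widetilde D(a\otimes m)=d_{A_d}(a)\otimes m+(-1)^{|a|}a\otimes\overline D(m)$, with $d_{A_d}$ the internal differential of $A_d$.

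For $d$ odd one has $d_{A_d}(s)=e$ with $e\in B_{d+1}$, so using $|s|=d$ odd and the direct sum decomposition $A_d\otimes_{B_{d+1}}(B_{d+1}\otimes\Lambda(z_i,\bar z_i))\cong(B_{d+1}\otimes\Lambda(z_i,\bar z_i))\oplus s\cdot(B_{d+1}\otimes\Lambda(z_i,\bar z_i))$, into whose first summand $e\otimes\bar z_i=1\otimes e\bar z_i$ falls, one computes $\widetilde D(\mathrm{ev}(z_i))=\widetilde D(1\otimes z_i+s\otimes\bar z_i)=1\otimes(\overline D(z_i)+e\bar z_i)-s\otimes\overline D(\bar z_i)$. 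The other side, $\mathrm{ev}(D(z_i))$, is computed by substituting $z_j\mapsto 1\otimes z_j+s\otimes\bar z_j$ into $D(z_i)$; here I would invoke that in the model of Proposition \ref{ModelEv} the restriction of $D$ to $\cC(L_d)=\Lambda z_i$ lands in $B_{d+1}\otimes\Lambda z_i$ and contains no $s$ — indeed that model is the base change along $B_d\to A_d$ of the model of Lemma \ref{ModelFn}, and $A_d$ contributes only the generator $s$ with $D(s)=e$. Since $s^2=0$ the substitution is linear in $s$, and after bookkeeping of Koszul signs it equals $1\otimes D(z_i)+s\otimes\Theta(D(z_i))$. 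Comparing the two expressions summand by summand then yields $\overline D(z_i)=D(z_i)-e\bar z_i$ and $\overline D(\bar z_i)=-\Theta(D(z_i))$.

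For $d$ even $A_d$ has the zero differential, so $\widetilde D$ acts summand by summand on \eqref{splitting} as the $A_d$-linear extension of $\overline D$; since $|s|=d$ is even this gives $\widetilde D(\mathrm{ev}(z_i))=1\otimes\overline D(z_i)+s\otimes\overline D(\bar z_i)$, whose components in \eqref{splitting} are $\overline D(z_i)$ and $\overline D(\bar z_i)$. Equating with $\mathrm{ev}(D(z_i))$ and reading off these components gives exactly $\overline D(z_i)=\mathrm{ev}(D(z_i))_1$ and $\overline D(\bar z_i)=\mathrm{ev}(D(z_i))_e$; this does not collapse to a closed formula as in the odd case because the relation $e^2=p_{d/2}$ makes the quadratic terms of the substitution fold back into the $B_{d+1}$-summand. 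The step requiring genuine care is this even case: strictly speaking ``$s$'' is not an element of $A_d$, so to make \eqref{ModelEvaluation} and the above substitution literally meaningful one works with the model $A'_d$ from the proof of Proposition \ref{ModelEv} and transports the computation along the quasi-isomorphism $A'_d\to A_d$, or one identifies $s$ with a nonzero scalar multiple of the Euler class $e\in A_d$. Aside from that and the Koszul-sign bookkeeping in $\Theta$ and $\widetilde D$, there is no real obstacle.
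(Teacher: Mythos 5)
Your proposal is correct and follows essentially the same route as the paper: both arguments apply the chain-map identity $\mathrm{ev}\circ D=\widetilde D\circ \mathrm{ev}$ to a generator $z_i$, compute $\widetilde D(1\otimes z_i+s\otimes \bar z_i)=1\otimes(\overline D(z_i)+e\bar z_i)-s\otimes \overline D(\bar z_i)$ in the odd case using $s^2=0$ and $\mathrm{ev}(\chi)=1\otimes\chi+s\otimes\Theta(\chi)$, and read off the even case directly from the chain-map property together with the decomposition \eqref{splitting}. Your added care about the meaning of $s$ in the even case is a reasonable clarification but not a departure from the paper's argument.
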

\begin{proof}
 The even case follows immediately as the model of the evaluation map is a chain map and from the direct sum decomposition \eqref{splitting}. For $d$ odd the differential of $\mathrm{ev}(z_i)=1\otimes z_i+s\otimes \bar{z}_i$ is given by
 \[ 1\otimes (\overline{D}(z_i)+e\bar{z}_i)-s\otimes \overline{D}(\bar{z}_i)\]
 which has to agree with $\mathrm{ev}(D(z_i))$. Since $s^2=0\in A_d$, for every $\chi \in B_{d+1}\otimes \Lambda(z_i)$ we have $\mathrm{ev}(\chi)=1\otimes \chi +s\otimes \Theta(\chi)$. Hence, $D(z_i)=\overline{D}(z_i)+e\bar{z}_i$ and $\Theta(D(z_i))=-\overline{D}(\bar{z}_i)$.
\end{proof}
In practice it is, however, quite difficult to determine the complete relative Sullivan model of $E_{\bS^d}\hcoker \SO(d+1)$ from the small cdga model from Lemma \ref{ModelFn}. In order to prove Theorem \ref{Haefliger} we focus only on a part of the differential. Namely, for a relative Sullivan algebra $(B\otimes \Lambda V,D)$ one can decompose the differential as $D=\sum_{i\geq 0}D_i$, where the $D_i$ are defined by as the composition
\[ V\xrightarrow{D|_{V}}B\otimes \Lambda V\xrightarrow{\Id\otimes \mathrm{pr_i}} B\otimes \Lambda^i V.\]
Then $D_k(B\otimes \Lambda^iV)\subset B\otimes \Lambda^{i+k-1}V$ and hence the part of the differential responsible for the kernel of $H^*(B)\to H^*(B\otimes \Lambda V,D)$ is encoded in $D_0$. 

\medskip 
We begin by studying relative Sullivan models of $F_d\hcoker \U(d)\to \B\U(d)$ which determines the relative Sullivan model of $F_d\hcoker \SO(d)$ by base change along $\B\SO(d)\to \B\U(d)$. Since $\U(d)$ acts freely on $F_d$, the Borel construction $F_d\hcoker \U(d)$ is equivalent to the quotient $F_d\slash \U(d)=\mathrm{sk}_{2d}\B \U(d)$, and the projection $p:F_d\hcoker \U(d)\to \B \U(d)$ is equivalent to the inclusion of the $2d$-skeleton which is formal (see proof of Lemma \ref{ModelFn}). Denote $H^*(\B\U(d))=\bQ[c_1,\dots,c_d]$ by $U_d$, then $\ker(p^*)\subset U_d$ is a monomial ideal with minimal generating set consisting of all monomials $m$ with the property that if another monomial $m'$ of smaller degree divides $m$ then $|m'|\leq 2d$. The degrees of the minimal monomial generating set range from $2d+2$ to $4d$ and we denote it by $\{m^i_j\}_{2d+2\leq i \leq 4n,\ 1\leq j \leq k_i}$ where $|m_j^i|=i$, i.e.\ $m^i_1,\hdots,m^i_{k_i}$ runs through all minimal monomial generators of degree $i$. 
\begin{prop}
	The projection $p:F_d\hcoker \U(d)\to \B \U(d)$ has a relative Sullivan model of the form $(U_d\otimes\Lambda (V_1\oplus V_2),D)$ so that $V_1$ is finite dimensional with basis elements $z^j_i$ for $2d+2\leq i\leq 4d$ and $\ 1\leq j\leq k_i$ so that 
	\begin{equation}\label{C1}
		D(z_i^j)=m_i^j\in U_d
	\end{equation}
	and $V_2$ is of finite type and $D_0|_{V_2}=0$, i.e.\ the following the composition vanishes
	\begin{equation}\label{C2}
		V_2\xrightarrow{D|_{1\otimes V_2}}U_d\otimes \Lambda (V_1\oplus V_2)\xrightarrow{\Id\otimes \pi_0} U_d\otimes \Lambda^0(V_1\oplus V_2)\cong U_d.
	\end{equation}
\end{prop}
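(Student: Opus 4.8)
The plan is to produce the model as a Koszul--Tate resolution of the surjection of cdgas $(U_d,0)\twoheadrightarrow(U_d/I,0)$, where $I=\ker(p^*)$. This is precisely the task: since $F_d\hcoker\U(d)\simeq\mathrm{sk}_{2d}\B\U(d)$ and, as recalled in the proof of Lemma~\ref{ModelFn}, $p$ is identified with the (formal) inclusion of the $2d$-skeleton, a relative Sullivan model of $p$ over $U_d$ is the same as a relative Sullivan algebra $(U_d\otimes\Lambda W,D)$ equipped with a quasi-isomorphism to $(U_d/I,0)$ under $(U_d,0)$; one may then take $W=V_d$ and identify the model with $(U_d\otimes\cC_{CE}(L_d),D)$. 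Two elementary observations drive the proof: the homotopy fibre of $p$ is $F_d$, which is $2d$-connected, so every generator of the relative minimal model lies in degree $\geq 2d+1$; whereas $U_d/I=H^*(\mathrm{sk}_{2d}\B\U(d);\bQ)$ is concentrated in degrees $\leq 2d$.

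First I would carry out the Koszul step: let $V_1$ have basis the $z_i^j$ with $|z_i^j|=i-1$, one generator for each minimal monomial generator $m_i^j$ of $I$, and set $D(z_i^j)=m_i^j$. Since a monomial ideal is finitely generated and, as recorded before the proposition, the minimal generators of $I$ occupy degrees $2d+2\leq i\leq 4d$, this $V_1$ is finite dimensional and satisfies \eqref{C1}. In $(U_d\otimes\Lambda V_1,D)$ the differential lowers $V_1$-word length by exactly one (because $D(z_i^j)\in U_d=U_d\otimes\Lambda^0$), so cocycles and coboundaries split by word length; consequently the word-length-$0$ part of $H^*(U_d\otimes\Lambda V_1,D)$ is $U_d/I$, while all of $H^*$ in word length $\geq 1$ is positive Koszul homology of the sequence $(m_i^j)$. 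Next I would run the usual degree-by-degree enlargement, carrying a cdga map $\phi$ to $(U_d/I,0)$ under $U_d$ that sends every adjoined generator to $0$ --- consistent at the Koszul step because $\phi(m_i^j)=0$ in $U_d/I$ --- and at each degree adjoining a basis for the kernel of $H^*(\phi)$ in that degree. Let $V_2$ collect all generators adjoined after the Koszul step, so that $(U_d\otimes\Lambda(V_1\oplus V_2),D)$ is a relative minimal model of $p$; it is of finite type because $U_d$ is Noetherian of finite type. To establish \eqref{C2}, suppose at some stage we must kill a class $[\zeta]$ in the kernel of $H^*(\phi)$, and split $\zeta=\zeta_0+\zeta_+$ along $U_d\otimes\Lambda W=U_d\oplus(U_d\otimes\Lambda^{\geq 1}W)$. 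Then $\zeta_0\in U_d$ is automatically a cocycle, hence so is $\zeta_+$; and since $\phi$ kills all generators while $\zeta_+\in U_d\otimes\Lambda^{\geq 1}W$, we get $\phi(\zeta)=\zeta_0\bmod I$, so $H^*(\phi)[\zeta]=0$ forces $\zeta_0\in I$. As $I$ is generated by the $m_i^j=D(z_i^j)$, there are $a_i^j\in U_d$ with $\zeta_0=D(\sum_{i,j}a_i^j z_i^j)$, hence $[\zeta]=[\zeta_+]$; taking $\zeta_+$, which has vanishing $\Lambda^0$-component, as the differential of the new generator yields \eqref{C2}.

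The construction is standard, so the real work is the bookkeeping that keeps the splitting argument valid at every stage: one must simultaneously maintain that $\phi$ is defined, annihilates every adjoined generator, and is a cohomology isomorphism through the degree already treated; that the next batch of generators is exactly a basis for the kernel of $H^*(\phi)$ in the next degree; and that each such generator may be taken with vanishing $\Lambda^0$-component. The first two points are the usual existence statement for relative minimal models, and the third --- the only genuinely new input --- rests entirely on the two degree facts of the first paragraph together with the fact that $D(V_1)$ already generates the obstruction ideal $I$. I expect the delicate point to be verifying that one can consistently keep $\phi$ annihilating all generators throughout the induction, since this is exactly what guarantees that the classes one has to kill reduce, modulo coboundaries coming from $V_1$, to their $U_d$-components.
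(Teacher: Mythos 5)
Your proof is correct, but it runs in the opposite direction from the paper's. The paper starts from a pre\-existing relative Sullivan model $(U_d\otimes\Lambda V,D)$ whose fibrewise part is the minimal model of the $2d$-connected space $F_d$, and then \emph{normalizes} it: degree by degree it splits off $V_1^{k}$ as preimages under $D_0$ of the new minimal monomial generators (using that every element of $U_d^{>2d}$ dies in $H^*(F_d\hcoker\U(d))$, so such preimages exist and can be corrected to make \eqref{C1} exact), sets $V_2^{k}=(D_0)^{-1}(I_{d,k})$, and conjugates by automorphisms over $U_d$ of the form $w\mapsto w+z_w$ with $z_w\in U_d^+\otimes\Lambda V_1$ to kill $D_0$ on $V_2$; above degree $4d$ all of $D_0$ is absorbed this way. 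You instead build the model from scratch as a Koszul--Tate-type resolution of $U_d\twoheadrightarrow U_d/I$ (legitimate, since formality of the skeleton inclusion is established in the proof of Lemma~\ref{ModelFn}), and your key step --- that any cocycle to be killed has $U_d$-component in $I=(D z_i^j)$, hence exact from $U_d\otimes V_1$, so the new generator's differential can be taken in $U_d\otimes\Lambda^{\geq 1}$ --- is the same mechanism as the paper's automorphism trick, just applied at construction time rather than as a change of basis afterwards. Both arguments are sound; yours is more self-contained and makes transparent that $\operatorname{im}D_0$ is exactly the ideal generated by $D(V_1)$, while the paper's has the advantage of keeping the fibrewise restriction of the model literally equal to $\mathcal{C}_{CE}(L_d)$ throughout (the automorphisms are the identity modulo $U_d^+$). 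That last point is not required by the proposition as stated, but it is what licenses the phrase ``extending the differential on $\mathcal{C}_{CE}(L_d)$'' in Corollary~\ref{auxilliary} and the explicit computations of Section~3; your aside that ``one may then take $W=V_d$'' is not actually guaranteed by your construction (the fibre of your model is a Sullivan model of $F_d$ but need not be minimal), so if you wanted to recover the corollary verbatim you would need one further, standard, minimalization step on the fibre. Your one-line justification of the finite type of $V_2$ should also be expanded to the honest reason: at each stage the cdga is of finite type, hence so is its cohomology, hence the kernel to be killed in each degree is finite dimensional.
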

\begin{proof}
	We will show inductively that there is a relative Sullivan model of the form $(U_d\otimes \Lambda (V_1\oplus V_2\oplus W),D)$ with $W=W^{\geq k+1}$ and $V_i=V_i^{\leq k}$ so that the differential on $V_1\oplus V_2$ satisfies \eqref{C1} and  \eqref{C2}. 
	
	Since $F_d$ is $2d$-connected, the minimal Sullivan model of $F_d$ is of the form $(\Lambda V,d)$ with $V=V^{\geq 2d+1}$. Let $(U_d\otimes \Lambda V,D)$ be any relative Sullivan model of $F_d\hcoker \U(d)$, then $D|_{V^{2d+1}}=D_0|_{V^{2d+1}}$ for degree reasons and $\mathrm{im}(D_0|_{V^{2d+1}})$ is spanned by the monomials $m^{2d+2}_j$ since they generate $\ker(p^*)$ in degree $2d+2$. Hence, we set $W=V^{>2d+2}$ and $V_2=\ker(D_0|_{V^{2d+1}})$ and $V_1$ the span of preimages $z^{2d+2}_1,\hdots,z^{2d+1}_{k_{2d+2}}$ of the monomial generators of $\ker(p^*)$ of degree $2d+2$, which proves the induction hypothesis for $k=2d+1$.
	
	Let $(U_d\otimes \Lambda (V_1\oplus V_2\oplus W),D)$ be a relative Sullivan model as in the induction hypothesis for some $k\geq 2d+1$. Let $I_{d,k}:=\{m_j^i\}_{2d+2\leq i<k,\ 1\leq j\leq k_i}$ be the ideal generated by monomials in the minimal generating set of $U_d^{>2d}$ of degree less than $k$. Observe that degree $k+2$ elements of $U_d$ can be decomposed as
	\[U_d^{k+2}\cong I^{k+2}_{d,k+1}\oplus \mathrm{span}(m^{k+2}_1,\dots,m^{k+2}_{k_{k+2}}).\]
	For each $m^{k+2}_j$ there exists $v_j\in W^{k+1}$ with $D_0(v_j)=m^{k+2}_j$ and $z_j\in U_d^+\otimes \Lambda (V_1\oplus V_2)$ so that $D(v_j+z_j)=m^{k+2}_j$ because all elements of degree $>2d$ are in the kernel of $p^*:U_d\to H^*(F_d\hcoker \U(d);\bR)$. Hence, 
	\[W^{k+1}=(D_0|_{W^{k+1}})^{-1}(I_{d,k+1}^{k+2})\oplus \mathrm{span}(v_1,\hdots,v_{k_{k+2}})\]
	and we set 
	\begin{align*}
		V_1^{k+1}&:=\mathrm{span}(v_1,\hdots,v_{k_{k+2}}),\\
		V_2^{k+1}&:=(D_0|_{W^{k+1}})^{-1}(I_{d,k+1}^{k+2}),
	\end{align*}
	and $W=W^{\geq k+2}$. In order to satisfy \eqref{C1} and \eqref{C2} we have to modify the differential. Observe that for $x\in V_2^{k+1} $ one can easily construct an element $z_x\in U^+_d\otimes V_1$ so that $D_0(x+z_x)=0$ by using \eqref{C1} from the induction assumption. Picking a basis $w_1^{k+1},\dots,w_{l_{k+1}}^{k+1}$ of $V_2^{k+1}$ we can define an automorphism $\phi_{k+1}$ of $U_d\otimes \Lambda( V_1\oplus V_2\oplus W)$ over $U_d$ which is the identity on generators except in degree $k+1$, where we define $\phi$ on the basis chosen above by
	\begin{align*}
		\phi(w^{k+1}_j)&:=w^{k+1}_j+z_{w^{k+1}_j} & \phi(v^{k+1}_j)&:=v^{k+1}_j+z_{v^{k+1}_j}
	\end{align*}
	The conjugation $\phi^{-1}\circ D\circ \phi$ is a new differential which is unchanged on $(V_1\oplus V_2)^{\leq k}$ and which satisfies \eqref{C1} and \eqref{C2} by construction.
	
	Once $k>4d$ for any element in $W$ with $D_0(w)\neq 0$ we can find an element in $z_w\in U_d^+\otimes \Lambda V_1$ so that $D_0(w+z_w)=0$ and we can repeat the previous construction and modify the differential on generators of degree $>k$ so that $D_0$ vanishes and which doesn't change \eqref{C1} and \eqref{C2} in degree $\leq k$, which is the model we wanted to construct.
\end{proof}
We obtain a model of $F_d\hcoker \SO(d)$ by base change along $\B\SO(d)\to \B\U(d)$ which gives the following statement.
\begin{cor}\label{auxilliary}
There exists a relative Sullivan model of $F_d\hcoker \SO(d)$ of the form 
\[(B_{d}\otimes \Lambda(V_1\oplus V_2),D)\]
extending the differential on $\mathcal{C}_{CE}(L_d)$ so that for a basis of $z_1,\hdots,z_N$ of $V_1$ we have $D(z_i)=f_i$, where $f_1,\dots,f_N\in B_d$ denotes a minimal generating set of $\ker(H^*(\B\SO(d);\bQ)\to H^*(F_d\hcoker \SO(d);\bQ)$, and $D_0|_{V_2}=0$.
\end{cor}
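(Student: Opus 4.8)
The plan is to deduce the statement from the preceding Proposition by base change along the map $\B i\colon\B\SO(d)\to\B\U(d)$, and then to replace the resulting generating set of the kernel by a minimal one. First I would check that the square
\[
\begin{tikzcd}[ampersand replacement=\&]
F_d\hcoker\SO(d)\arrow{r}\arrow{d} \& F_d\hcoker\U(d)\arrow{d}{p}\\
\B\SO(d)\arrow{r}{\B i} \& \B\U(d)
\end{tikzcd}
\]
is a homotopy pullback: the fibration $p$ is the $F_d$-bundle associated to the universal principal $\U(d)$-bundle via the $\U(d)$-action on $F_d$, so its pullback along $\B i$ is the $F_d$-bundle associated to the induced principal $\U(d)$-bundle $\mathrm{E}\SO(d)\times_{\SO(d)}\U(d)\to\B\SO(d)$, which is exactly $F_d\hcoker\SO(d)\to\B\SO(d)$. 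All spaces in sight are simply connected, so \cite[Prop.\,15.8]{FHT} applies: base change along $i^*\colon U_d\to B_d$ of the relative Sullivan model $(U_d\otimes\Lambda(V_1\oplus V_2),D)$ of $p$ from the Proposition produces a relative Sullivan model $(B_d\otimes\Lambda(V_1\oplus V_2),D)$ of $F_d\hcoker\SO(d)\to\B\SO(d)$ with $D(z_i^j)=i^*(m_i^j)\in B_d$ and $D_0|_{V_2}=0$. Since $(B_d\otimes\Lambda(V_1\oplus V_2))\otimes_{B_d}\bQ=(U_d\otimes\Lambda(V_1\oplus V_2))\otimes_{U_d}\bQ=\mathcal{C}_{CE}(L_d)$ compatibly with differentials, $D$ extends the differential on $\mathcal{C}_{CE}(L_d)$.

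Next I would identify $\ker\bigl(B_d\to H^*(F_d\hcoker\SO(d);\bQ)\bigr)$ with the ideal $J$ generated by the $i^*(m_i^j)$. On the one hand, since $D$ is $B_d$-linear, vanishes on $B_d^+$ and satisfies $D_0|_{V_2}=0$, its zeroth component has image $\mathrm{im}(D_0)=J$; a word-length count in $B_d\otimes\Lambda(V_1\oplus V_2)$ then shows that any element of $B_d^+$ which is a $D$-coboundary already lies in $\mathrm{im}(D_0)$, so the kernel is contained in $J$. On the other hand, functoriality of the square together with the fact (recalled before the Proposition) that $\ker\bigl(U_d\to H^*(F_d\hcoker\U(d))\bigr)$ is the monomial ideal $(m_i^j)$ shows that each $i^*(m_i^j)$, hence the ideal $J$ it generates, lies in the kernel. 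Thus $\ker\bigl(B_d\to H^*(F_d\hcoker\SO(d);\bQ)\bigr)=J$, and the $i^*(m_i^j)$ form a generating set of this kernel, though in general not a minimal one.

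Finally I would clean up the generating set. By graded Nakayama the classes of the $i^*(m_i^j)$ span $J/B_d^+J$, so some subset $f_1,\dots,f_N$ of them is a homogeneous minimal generating set of $J$; let $z_1,\dots,z_N\in V_1$ be the corresponding generators, so $D(z_k)=f_k$. For each remaining generator $z_i^j$ of $V_1$ write $i^*(m_i^j)=\sum_k b_k f_k$ with $b_k\in B_d$; then $D\bigl(z_i^j-\sum_k b_k z_k\bigr)=0$, so, re-expressing the relative Sullivan algebra in terms of the new generators $z_1,\dots,z_N$, the elements $z_i^j-\sum_k b_k z_k$, and the old $V_2$, I obtain a model of the required form with $V_1=\mathrm{span}(z_1,\dots,z_N)$, $D|_{V_1}$ given by the minimal generating set, and $D_0$ vanishing on the enlarged $V_2$.

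I expect the only real point of care to be this last step: one must carry out the change of generators degree by degree, as in the proof of the Proposition, and check that the new generators still admit a Sullivan filtration — which is easy here because the new generators $z_i^j-\sum_k b_k z_k$ are $D$-closed and can be placed at the bottom of the filtration alongside $z_1,\dots,z_N$ — and that the fibre part remains $\mathcal{C}_{CE}(L_d)$ up to the induced automorphism.
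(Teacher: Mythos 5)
Your proposal is correct and follows the same route as the paper, whose entire proof is the single sentence ``base change along $\B\SO(d)\to\B\U(d)$''; the identification of the pullback square, the application of \cite[Prop.\,15.8]{FHT}, and the word-length argument identifying $\ker\bigl(B_d\to H^*(F_d\hcoker\SO(d);\bQ)\bigr)$ with $\mathrm{im}(D_0)$ are all fine. The genuinely valuable addition in your write-up is the final cleanup step: after base change the differential sends $z_i^j$ to $i^*(m_i^j)$, and these images are very far from a minimal generating set --- every $m_i^j$ involving an odd Chern class maps to $0$, and the surviving Pontrjagin monomials may be redundant --- so the corollary as literally stated (with $D(z_i)=f_i$ for a \emph{minimal} generating set $f_1,\dots,f_N$ and everything else absorbed into $V_2$ with $D_0|_{V_2}=0$) really does require the change of generators $z_i^j\mapsto z_i^j-\sum_k b_kz_k$ you describe. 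This step is silently omitted in the paper but is exactly what is used later, e.g.\ in the proof of the main theorem for $d=3$, where $V_1$ is asserted to be one-dimensional with $D(z_1)=p_1^2$ even though six minimal monomial generators $m_i^j$ exist upstairs in $U_3$. Your observation that the new generators are $D$-closed (not merely $D_0$-closed), hence sit at the bottom of the Sullivan filtration and can be moved into $V_2$ without disturbing the fibre $\mathcal{C}_{CE}(L_d)$, is the right way to justify that the modified algebra is still a relative Sullivan model of the required form.
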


Theorem \ref{Haefliger} and Theorem \ref{MainComputation} follow from the following statement combined with Theorem \ref{RelativeGFModel}.
\begin{thm}\label{MAIN}
The map on cohomology induced by the projection map 
\[p:\Gamma_{\bS^{d}}\hcoker \SO(d+1)\to \B \SO(d+1)\]
is injective for $d=3$. More precisely, there exists a class $\bar{z}_1\in H^4(\Gamma_{\bS^d}\hcoker \SO(d+1);\bR)$ so that 
\begin{align*}
 \bR[e,p_1\bar z_1]/(p_1^2-e\bar{z}_1)\longrightarrow H^*(\Gamma_{\bS^d}\hcoker \SO(d+1);\bR)
\end{align*}
is injective. If $d$ is even the kernel of $p^*$ is generated by all polynomials in the Pontrjagin classes $p_1,\hdots,p_{d/2}$ of degree $>2d$. 
\end{thm}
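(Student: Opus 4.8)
The plan is to read everything off the relative Sullivan model $(B_{d+1}\otimes\Lambda(z_i,\bar z_i),\bar D)$ of $\Gamma_{\bS^d}\hcoker\SO(d+1)\to\B\SO(d+1)$ from Proposition~\ref{ModelEv}, together with the corollary following it, which expresses $\bar D$ through the differential $D$ on the model $(B_d\otimes\Lambda(V_1\oplus V_2),D)$ of $E_{\bS^d}\hcoker\SO(d+1)$ of Corollary~\ref{auxilliary}. The governing observation is that the kernel of $p^*\colon B_{d+1}\to H^*(B_{d+1}\otimes\Lambda(z_i,\bar z_i),\bar D)$ is controlled by the linear part $\bar D_0\colon\mathrm{span}\{z_i,\bar z_i\}\to B_{d+1}$: if $J\subset B_{d+1}$ is the ideal it generates, then $J\cdot\Lambda(z_i,\bar z_i)$ is a dg ideal, the induced differential on the quotient $(B_{d+1}/J)\otimes\Lambda(z_i,\bar z_i)$ has vanishing linear part, and so no nonzero class of $B_{d+1}/J$ can bound there; chasing degree $0$ gives $\ker p^*\subset J$. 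Whether equality holds depends on whether the generators of $J$ are actual $\bar D$-coboundaries, and here the parity of $d$ enters.

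Step two is to identify $\ker\bigl(B_d\to H^*(F_d\hcoker\SO(d);\bQ)\bigr)$, which by Corollary~\ref{auxilliary} is generated by the values $f_i=D(z_i)$ on a basis of $V_1$. In the model $(B_d\otimes WU_d,\tilde d)$ of Lemma~\ref{ModelFn} one has $[p_j]=\pm[c_{2j}]$ in cohomology (with $[e^2]=\pm[c_d]$ for $d$ even), while all $c$-monomials of degree $>2d$ vanish already in $\bQ[c_1,\dots,c_d]/(\deg>2d)$. Presenting $(B_d\otimes WU_d,\tilde d)$ as the quotient of the Koszul resolution $B_d[c_1,\dots,c_d]\otimes\Lambda(h_1,\dots,h_d)$, $\tilde d h_i=c_i-\phi(c_i)$ (which is quasi-isomorphic to $B_d$ via $\phi\colon H^*\B\U(d)\to B_d$), by the dg ideal generated by those high-degree $c$-monomials, the resulting long exact sequence identifies the kernel with the ideal $I_{>2d}\subset B_d$ generated by all monomials in $p_1,\dots,p_{\lfloor d/2\rfloor}$ of degree $>2d$ (reading $p_{d/2}=e^2$ when $d$ is even). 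Hence each $f_i$ is a monomial in Pontrjagin classes, in particular even in $e$; running it through the base change $B_d\xrightarrow{\simeq}A_d$ and the evaluation map of the corollary gives, for $z_i\in V_1$, the formulas
\[
\bar D(z_i)=\begin{cases}f_i-e\bar z_i,&d\text{ odd},\\[2pt]f_i,&d\text{ even},\end{cases}\qquad \bar D(\bar z_i)=0.
\]
Moreover $\bar D_0$ vanishes on $V_2$ and on every $\bar z_i$.

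For even $d$ this finishes the proof: each generator $f_i$ of $J$ equals $\bar D(z_i)$, so $J=I_{>2d}\cdot B_{d+1}$ — the ideal of all polynomials in $p_1,\dots,p_{d/2}$ of degree $>2d$ — lies in $\ker p^*$, and with step one $\ker p^*=J$. For $d=3$ one computes $\ker(B_3\to H^*(F_3\hcoker\SO(3)))=(p_1^2)$, so $V_1$ is spanned by a single $z_1$ of degree $7$ with $\bar D(z_1)=p_1^2-e\bar z_1$, $\bar D(\bar z_1)=0$; thus $\bar z_1\in H^4$ and $p_1^2=e\bar z_1$ in cohomology. To build the injection I would pass to the sub-cdga $C'=(B_4\otimes\Lambda(z_1,\bar z_1),\bar D)\cong(\bQ[p_1,e,\bar z_1]\otimes\Lambda(z_1),\,\bar D z_1=p_1^2-e\bar z_1)$, a Koszul complex on the non-zero-divisor $p_1^2-e\bar z_1$, so $H^*(C')=\bQ[p_1,e,\bar z_1]/(p_1^2-e\bar z_1)$. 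The point is that $C'$ is a retract of the full model: no $D(z_i)$ with $z_i\in V_2$ contains a term $p_1^a z_1$, since such a term would, via $D^2=0$ and $D(z_1)=p_1^2$, force a $\Lambda^0$-summand $p_1^{a+2}$ in $D$ of the remaining terms, against $D_0|_{V_2}=0$ (an induction on degree, with base case that degree-$7$ generators of $V_2$ are $D$-closed). Consequently $D$, and thus $\bar D$ via the formulas above and the derivation $\Theta$, sends every $z_i,\bar z_i$ with $i\ge2$ into the ideal they generate, so collapsing those generators to $0$ defines a cdga retraction $C\to C'$. Therefore $\bQ[p_1,e,\bar z_1]/(p_1^2-e\bar z_1)\hookrightarrow H^*(\Gamma_{\bS^3}\hcoker\SO(4);\bQ)$, and since $\bQ[p_1,e]$ injects into the left-hand ring this also yields injectivity of \eqref{map} for $d=3$; Theorem~\ref{RelativeGFModel} transfers the conclusions to $H^*(\mathcal L_{\bS^3};\SO(4))$.

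The main obstacle, I expect, is the bookkeeping in step two: one must pass carefully between the model of $F_d\hcoker\SO(d)$ over $B_d$ and that of $E_{\bS^d}\hcoker\SO(d+1)$ over $A_d$, and then through $\mathrm{ev}$ and the splitting \eqref{splitting}, to be sure each $f_i$ reappears in $B_{d+1}$ as an honest coboundary with no leftover $e$-component — it is precisely this that makes the even case come out as stated. A secondary subtlety, special to $d=3$, is the $D^2=0$ argument excluding linear $z_1$-terms from $D(V_2)$: it relies on $V_1$ being one-dimensional, and without it one would only get $\ker p^*\subset(p_1^2)$ rather than the sharp $\ker p^*=0$ (and the clean retract would be unavailable).
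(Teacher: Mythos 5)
Your argument for $d=3$ is the paper's argument in different clothing: your sub-cdga $C'=(B_4\otimes\Lambda(z_1,\bar z_1),\bar D)$ is the subcomplex $C_1$ of \eqref{sesComplexes}, your Koszul-complex computation of $H^*(C')$ is the identification $H(C_1)\cong\bQ[e,p_1,\bar z_1]/(p_1^2-e\bar z_1)$, and your retraction obtained by collapsing the generators coming from $V_2$ is precisely the paper's splitting of \eqref{sesComplexes}, justified by the same $D^2=0$ computation excluding a term $fz_1$ (with $f\in B_4$) from $D(V_2)$ using that $D_0|_{V_2}=0$ and that $V_1$ is one-dimensional. The only genuine divergence is in the even case: to show every polynomial of degree $>2d$ dies, the paper factors the projection through the evaluation map and applies Leray--Hirsch to $(\Gamma_{\bS^d}\times\bS^d)\hcoker\SO(d+1)\to\Gamma_{\bS^d}\hcoker\SO(d+1)$, whereas you observe that each minimal generator $f_i$ equals $\bar D(z_i)$ on the nose (valid because $D(z_i)=f_i$ with no higher word-length terms by Corollary~\ref{auxilliary}) and is therefore an honest coboundary; your bound $\ker p^*\subseteq J$ via the ideal generated by $\mathrm{im}\,\bar D_0$ is a mild repackaging of the paper's degree count. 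Both routes rest on the point you rightly flag as delicate, namely that the minimal generators of $\ker\bigl(B_d\to H^*(F_d\hcoker\SO(d))\bigr)$ can be taken in $B_{d+1}$ (even in $e$), which is what forces $\bar D(\bar z_i)=0$; the paper uses this just as implicitly when it asserts $\bar D_0(\bar z_i)=0$. Your algebraic version has the small advantage of exhibiting explicit bounding cochains, at the cost of needing the identification of the kernel as the monomial ideal $I_{>2d}$, which the paper's geometric route partially avoids.
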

\begin{proof}
We first prove the claim for $d$ even. First observe that any polynomial in the Pontrjagin classes $p_1,\hdots,p_{d/2}$ of degree $>2d$ vanishes in $H^*((\Gamma_{\bS^d}\times \bS^d)\hcoker \text{SO}(d+1))$. This is because the projection to $\B\SO(d+1)$ factors through
\[
 (\Gamma_{\bS^d}\times \bS^d)\hcoker \SO(d+1))\xrightarrow{\mathrm{ev}} E_{\bS^d}\hcoker \SO(d+1) \to \B\SO(d+1)
\]
and all such elements vanish in $E_{\bS^d}\hcoker \SO(d+1)$ by Corollary \ref{auxilliary}. Moreover, the fibration 
\[\bS^d\longrightarrow (\Gamma_{\bS^d}\times \bS^d)\hcoker \SO(d+1) \xrightarrow{\mathrm{pr}_1} \Gamma_{\bS^d}\hcoker \SO(d+1)\]
satisfies the assumptions of the Leray-Hirsch theorem as the pullback of the Euler class in $\bS^d\hcoker \SO(d+1)\simeq \B\SO(d)$ along $\mathrm{pr}_2$ restricts to a generator $H^n(\bS^d)$ so that $\mathrm{pr}_1$ induces an injection on cohomology. Therefore any element in $H^{>2d}(\B\SO(d+1);\bR)$ vanishes in $H^*(\Gamma_{\bS^d}\hcoker \SO(d+1);\bR)$ already. It remains to show that \eqref{map} is injective in degrees $\leq 2d$. This follows because $|z_i|>2d$ for all $i$ and  $\overline{D}_0(\bar{z}_i)=0$ for all $i$ by Corollary \ref{Dbar} so that no element of degree $<2d$ can be a coboundary of an element in $B_{d+1}^{\leq 2d}$.

\medskip
For $d=2n+1$ odd observe that
\[ (B_{d+1}\otimes \Lambda(V_1,\overline{V}_1),\overline{D}) \subset (B_{d+1}\otimes \mathcal{C}_{CE}(H^*(\bS^d)\otimes L_d),\overline{D})\]
is a subcomplex since 
\begin{align*}
 \overline{D}(z_i)&=m_i-e\bar{z}_i \in B_{d+1}\otimes \Lambda(V_1,\overline{V}_1),\overline{D}),\\
 \overline{D}(\bar{z}_i)&=-\Theta(m_i)=0,
\end{align*}
by Corollary \ref{Dbar}, where $z_i$ is a basis element of $V_1$ with $D(z_i)=m_i$ and the second equality holds as $\Theta$ is $B_{d+1}$-linear. Denote this subcomplex by $C_1$ and the cokernel by $C_2$ and consider the short exact sequences
\begin{equation}\label{sesComplexes}
0 \to C_1\to (B_{d+1}\otimes \mathcal{C}_{CE}(H^*(\bS^d)\otimes L_d),\overline{D}) \to C_2 \to 0. 
\end{equation}
The subcomplex $C_1$ is a sub-cdga which is a pure Sullivan algebra \cite[Ch.\ 32]{FHT}, i.e.\ the differential is only non-trivial for odd degree generators and lies in the algebra of even degree generators. Hence, it has an additional homological grading $F_kC_1:=B_{d+1}\otimes \Lambda \overline{V}_1\otimes \Lambda^k V_1$ and 
\[H_0(C_1)= \bQ[e,p_1,\dots,p_n,\{\bar z_i\}]\slash (\{m_i-e\bar{z}_i\}).\]
The map $B_{d+1}\to (B_{d+1}\otimes \mathcal{C}_{CE}(H^*(\bS^d)\otimes L_d),\overline{D})$ factors through $F_0C_1$ and hence we need to understand the image of the connecting homomorphism of the above short exact sequence. 

For $d=3$ the vector space $V_1$ is $1$-dimensional with generator $z_1$ and $D(z_1)=p_1^2$ by Corollary \ref{auxilliary}. Then the sequence \eqref{sesComplexes} actually splits as a sequence of cochain complexes so that 
\[H(C_1)\hookrightarrow H^*((B_{d+1}\otimes \mathcal{C}_{CE}(H^*(\bS^d)\otimes L_d),\overline{D})).\] 
This is because 
\[ \tilde C_2:=B_{d+1}\otimes \Lambda (V_1\oplus \overline{V}_1)\otimes \Lambda^+(V_2\oplus \overline{V}_2)\]
is a subcomplex isomorphic to $C_2$ as the only way $\overline{D}|_{\tilde C_2}$ can have image in $C_1$ is if there is $w\in V_2$ so that $D(w)=f z_1+\chi$ for $f\in B_{d+1}$ and $\chi \in B_{d+1}\otimes \Lambda V_1\otimes \Lambda^+ V_2$. Suppose $w$ has the minimal degree where this happens, then 
\begin{align*}
 0=D^2(w)=fp_1^2+D(\chi)
\end{align*}
and since $D(\chi)$ cannot be contained in $B_{d+1}\otimes 1$ by construction it follows that $f=0$. Hence, $D^1|_{V_2}$ has image in $B_{d+1}\otimes V_2$ and consequently \eqref{sesComplexes} splits as as cochain complexes. This proves the second part the theorem as 
\[H(C_1)\cong \bQ[e,p_1,\bar{z}_1]/(p_1^2-e\bar z_1)\]
injects into $H^*((B_{d+1}\otimes \mathcal{C}_{CE}(H^*(\bS^d)\otimes L_d),\overline{D}))\cong H^*(\Gamma_{\bS^3}\hcoker \SO(4);\bQ) $. The first part then follows as the map $B_4\to H(C_1)$ is injective.
\end{proof}
\begin{rem}
	The formulation of Corollary \ref{auxilliary} for all dimensions $d$ is due to the author's attempt to prove a general version of Theorem \ref{MAIN}, namely that $H^*(\B\SO(d+1))\to H^*(\Gamma_{\bS^d}\hcoker \SO(d+1)$ is injective for all odd $d$. However, for $d\geq 5$ the free resolution of the kernel of $H^*(\B\SO(d))\to H^*(F_d\hcoker \SO(d))$, i.e.\ the ideal generated by monomials in the Pontrjagin classes of degree $>2d$, has higher syzygies and the proof for $d=3$ does not generalize to show that $D^1(V_2)\subset B_{d+1}\otimes  V_2$. In fact, one would need to show a stronger version of Corollary \ref{auxilliary} to get a splitting of \eqref{sesComplexes} for odd $d>3$. Nonetheless, it follows from Nariman's results that the monomials in the Euler and Pontrjagin classes do not vanish and I expect that one can improve the statement of Corollary \ref{auxilliary} or control the image of the connecting homomorphism of \eqref{sesComplexes} in order to find a purely algebraic proof of Nariman's result. For this reason, I have kept a general version of Corollary \ref{auxilliary} in this article in the hope that it can serve as a starting point to prove a more general version of Theorem \ref{MAIN} -- it is hard to imagine that the statement doesn't generalize to all odd $d$.
\end{rem}

\section{Smooth cohomology of \texorpdfstring{$\Diff_+(\bS^3)$}{Diff(S3)} in low degrees}
The original appendix to \cite{Nar23} that preceded this article contained the computation of a relative Sullivan model of $E_{\bS^d}\hcoker \SO(d+1)$ in low degrees for $d=3$, which is not needed anymore for the main result. However, it does provide a computation of the smooth cohomology up to degree $12$, following the analogous steps of Haefliger's computation of $H^{\leq 9}_{\mathrm{sm}}(\Diff_+(\bS^2))$ in \cite[Sect.\ 7]{Hae78}. As this might be of some independent interest we have included it here.
\begin{prop}\label{SmoothCohomology}
 The non-trivial part of the smooth cohomology of $\Diff_+(\bS^3)$ in degrees $\leq 12$ is given by
 \begin{center}
 \begin{tabular}{c| c c c c c c c c}
  k & $4$ & $6$ & $8$ & $9$ & $10$ & $11$ & $12$ \\ \hline
  $\dim_{\bR} H^k_{\mathrm{sm}}(\Diff_+(\bS^3))$ &  $6$  & $1$ & $15$ &$3$ & $4$ & $3$ & $31$
 \end{tabular}
 \end{center}
\end{prop}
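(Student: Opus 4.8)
\medskip
\noindent\emph{Proof strategy.} The plan is to run Haefliger's computation for $\bS^2$ from \cite[Sect.~7]{Hae78} in the case of $\bS^3$, carried out to degree $12$. By the Van-Est isomorphism $H^*_{\mathrm{sm}}(\Diff_+(\bS^3);\bR)\cong H^*(\mathcal{L}_{\bS^3};\SO(4))$ of \cite[pg.~43]{Hae79} and Theorem~\ref{RelativeGFModel}, it suffices to compute $H^{\leq 12}(\Gamma_{\bS^3}\hcoker\SO(4);\bQ)$, and I would do so from an explicit relative Sullivan model over $B_4=H^*(\B\SO(4);\bQ)=\bQ[p_1,e]$ (with $|p_1|=|e|=4$).

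The first and main step is to produce, through degree $15$, a relative Sullivan model of $E_{\bS^3}\hcoker\SO(4)\to\B\SO(4)$. Starting from Lemma~\ref{ModelFn}, $F_3\hcoker\SO(3)\to\B\SO(3)$ has the model $(B_3\otimes WU_3,\tilde d)$ with $B_3=\bQ[p_1]$,
\[WU_3=\Big(\bQ[c_1,c_2,c_3]/(\text{monomials of degree}>6)\otimes\Lambda(h_1,h_2,h_3)\Big),\]
and $\tilde d(h_1)=c_1$, $\tilde d(h_2)=c_2+p_1$, $\tilde d(h_3)=c_3$; base change along $B_3\xrightarrow{\ \simeq\ }A_3=(B_4\otimes\Lambda(s),ds=e)$ (using $E_{\bS^3}\hcoker\SO(4)\simeq F_3\hcoker\SO(3)$ over $\bS^3\hcoker\SO(4)\simeq\B\SO(3)$ and the $d=3$ analogue of Corollary~\ref{corEn}) presents a model over $B_4$. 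Minimizing the fibre direction requires (a) the computation of $H^*(F_3;\bQ)$ in the range — it is concentrated in degrees $0,7,9,10,11,12,\dots$ with $\dim=1,4,1,3,1,4,\dots$ there, read off directly from $WU_3$ with $\tilde d(h_i)=c_i$ — which identifies the generators of $\cC_{CE}(L_3)=\Lambda V_3$ through degree $15$: four classes in degree $7$, then the duals of $H^{\geq 9}(F_3)$, the $\binom{4}{2}=6$ "secondary" generators in degree $13$ killing the products of the degree-$7$ classes, and four more secondary generators in degree $15$; and (b) the twisted differential $D$ on these over $B_4$. Low-degree and connectivity constraints ($V_3=V_3^{\geq 7}$, $B_3$ concentrated in degrees divisible by $4$) force $D$ on the low generators — $D(z_1)=p_1^2$ for the generator $z_1$ of $V_1$ from Corollary~\ref{auxilliary}, $D=0$ on the other degree-$7$ and on the degree-$9$ and degree-$11$ generators, $D$ valued in $p_1\cdot V_3^{7}$ on the degree-$10$ generators and in $p_1\cdot V_3^{9}$ on the degree-$12$ generators — while the remaining data (the ranks of these maps, and $D$ on the degree-$13$, $14$, $15$ generators) must be extracted from the honest minimization of $(B_3\otimes WU_3,\tilde d)$, equivalently from the transgressions in the Serre spectral sequence of $F_3\to F_3/\SO(3)\to\B\SO(3)$, whose image in the base is exactly the ideal $(p_1^2)$ by Corollary~\ref{auxilliary}.

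The second step is formal: by Proposition~\ref{ModelEv} and Corollary~\ref{Dbar}, $\Gamma_{\bS^3}\hcoker\SO(4)$ has model $(B_4\otimes\Lambda(z_i,\bar z_i),\bar D)$ with $|\bar z_i|=|z_i|-3$,
\[\bar D(z_i)=D(z_i)-e\,\bar z_i,\qquad \bar D(\bar z_i)=-\Theta(D(z_i)),\]
where $\Theta$ is the $B_4$-linear derivation of degree $3$ with $\Theta(z_i)=\bar z_i$, $\Theta(\bar z_i)=0$; in particular $\bar z_1$ (degree $4$) is a cocycle — the class of Theorem~\ref{MainComputation} — and $\bar D(z_1)=p_1^2-e\bar z_1$. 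The third step is a finite degree-by-degree linear-algebra computation of $H^{\leq 12}$ of this cdga: only the generators with $|\bar z_i|\leq 12$ (i.e.\ $|z_i|\leq 15$) enter, and in each total degree $\leq 12$ the model is finite-dimensional, so it terminates with the stated dimensions. As consistency checks one sees at once that in degrees $4$ and $6$ the cocycle spaces are $\langle p_1,e,\bar z_1,\dots,\bar z_4\rangle$ and $\langle\bar z_5\rangle$, with no coboundaries, giving the entries $6$ and $1$.

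The main obstacle is the first step: pinning down the minimal model of $F_3$ far enough (including $H^{13},H^{14},H^{15}(F_3)$ and the secondary generators) and, above all, the precise $B_4^+$-linear terms of $D(z_i)$, which carry the nonformal content and cannot be obtained by degree bookkeeping alone. The ensuing linear algebra in degrees $10$--$12$ is long but routine; the one genuine risk there is failing to compute the model to high enough degree, since e.g.\ $H^{12}_{\mathrm{sm}}$ (of dimension $31$) receives contributions from $B_4^{12}$, from products such as $\bar z_1^{\,3}$, $p_1\bar z_a\bar z_b$ and $\bar z_9\bar z_a$, and from the companions $\bar z_i$ of the degree-$15$ generators.
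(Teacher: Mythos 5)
Your route is the same as the paper's: read off $H^*(F_3;\bQ)$ from $WU_3$, build a relative Sullivan model of $F_3\hcoker\SO(3)$ over $B_3$ through degree $15$ starting from Lemma~\ref{ModelFn}, transfer it to a model of $\Gamma_{\bS^3}\hcoker\SO(4)$ over $B_4$ via Proposition~\ref{ModelEv} and formula \eqref{Dbar}, and finish with degree-by-degree linear algebra. The structural claims you make along the way are all correct: the wedge decomposition of $F_3$, the six bracket-length-two generators in degree $13$ and the four in degree $15$, the formulas $\bar D(z_i)=D(z_i)-e\bar z_i$ and $\bar D(\bar z_i)=-\Theta(D(z_i))$, and the entries $6$ and $1$ in degrees $4$ and $6$.

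The gap is that the step you explicitly defer --- the ``honest minimization of $(B_3\otimes WU_3,\tilde d)$'' --- is the entire content of the paper's argument, and the table cannot be certified without it. Degree bookkeeping only tells you that $D$ of a degree-$12$ generator lies in $p_1\cdot V^9\cong\bQ\cdot p_1x_5$; it does not tell you that exactly one of the four degree-$12$ generators has nonzero differential ($D(x_{10})=p_1x_5$, while $D(x_{11})=D(x_{12})=D(x_{13})=0$), nor what $D$ does on the six degree-$13$ and seven degree-$15$ generators (e.g.\ $D(x_{1,2})=x_1x_2+p_1x_6$ acquires a correction term while $D(x_{1,3})=x_1x_3$ does not). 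These ranks feed directly into $H^{9}$ through $H^{12}$ of the section-space model, so the entries $3,4,3,31$ are not determined by what you have written. Closing the gap requires what the paper actually does: choose explicit cocycle representatives $\phi(x_i)\in WU_3$ for a basis of $H^*(F_3)$, determine which products $\phi(x_i)\phi(x_j)$ are exact and by what primitive, and inductively correct $D$ and the quasi-isomorphism $\Psi$ over $B_3$ so that $\tilde d(\phi(z)+a)=\Psi(d(z)+b)$, yielding the list \eqref{D}. Your outline locates this work correctly but does not perform it.
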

A straightforward computation from \eqref{WU_d} shows that
\begin{equation*}
  F_3\simeq_{\bQ} \bigvee^4S^7\vee S^9\vee \bigvee^3 S^{10} \vee S^{11}\vee\bigvee ^4 S^{12}\vee S^{14}\vee\bigvee ^3S^{15}.
\end{equation*}
Hence, a minimal Quillen model is given by
\[L_3=(\mathbb{L}(s^{-1}\overline{H}_*(F_3)),d=0)\]
and we denote by $x_1,\hdots,x_{17}\in \overline{H}_*(F_3)^{\vee}\subset \cC_{CE}(L_3)$ the algebra generators corresponding to the top degree cohomology of the wedge summands. Generators of $\cC_{CE}^{\leq 18}(L_3)$ are contained in $s(L_3^{\leq 2})^{\vee}\subset sL_3^{\vee}$, where $L_3^{\leq 2}$ denotes the subspace of elements of bracket length $\leq 2$. Since $L_3^2$ is isomorphic to the exterior algebra $\Lambda^2 s^{-1}\overline{H}_*(F_3)$, a basis of $\subset s(L^{2}_3)^{\vee}$ is given by $x_{i,j}$ for $1\leq i\leq j \leq 17$ of degree $|x_i|+|x_j|-1$ (note that if $|x_i|$ is odd, then $x_{i,i}=0$). Picking representatives of a basis of $H^*(WU_3)$ determines a quasi-isomorphism
\[\phi:\cC_{CE}(L_3)\longrightarrow WU_3,\]
which we choose as follows:

\begin{center}
 \begin{tabular}{ c| l }
 degree &   \\\hline
 $7$ &   \begin{minipage}[c]{0.7\textwidth}
       { \begin{align*}
         \phi(x_1)&=c_1h_3, & \phi(x_2)&=c_2h_2, & \phi(x_3)&=c_1^3h_1, & \phi(x_4)&=c_1c_2h_1
        \end{align*}
        }
       \end{minipage}\\
 $9$ & $\phi(x_5)=c_2h_3$\\
 $10$ & \begin{minipage}[c]{0.7\textwidth}
       { \begin{align*}
         \phi(x_6)&=c_2h_1h_3-c_1h_2h_3, & \phi(x_7)&=c_1^3h_1h_2, & \phi(x_8)&=c_1c_2h_1h_2
        \end{align*}
        }
       \end{minipage}\\
 $11$ & $\phi(x_9)=c_3h_3$\\
 $12$ &   \begin{minipage}[c]{0.7\textwidth}
       { \begin{align*}
         \phi(x_{10})&=c_2h_2h_3, & \phi(x_{11})&=c_1^3h_1h_3, & \phi(x_{12})&=c_1c_2h_1h_3, & \phi(x_{13})&=c_3h_1h_3
        \end{align*}
        }
        \end{minipage}\\
 $14$ & $\phi(x_{14})=c_3h_2h_3$\\
 $15$ &  \begin{minipage}[c]{0.7\textwidth}
       { \begin{align*}
         \phi(x_{15})&=c_1^3h_1h_2h_3, & \phi(x_{16})&=c_1c_2h_1h_2h_3, & \phi(x_{17})&=c_3h_1h_2h_3
        \end{align*}
        }
        \end{minipage}
 \end{tabular}
\end{center}
One can check that $\phi(x_i)\cdot \phi(x_j)$ vanishes for all $i,j$ except for $i=1,\ j=2$, where $\phi(x_1)\cdot \phi(x_2)=-c_1c_2h_2h_3=d(-c_2h_1h_2h_3)$. Since $d(x_{1,2})=x_1x_2\in \cC_{CE}(L_3)$ we set 
\[ 
 \phi(x_{i,j})=\begin{cases}
                -c_2h_1h_2h_3 & i=1,\ j=2\\
                0 & \text{otherwise}
               \end{cases}.
\]
It follows from degree reasons that $\phi$ vanishes for generators in $s(L_3^{>2})^{\vee}$, so this completely determines $\phi$. We then use the relative model from Lemma \ref{ModelFn} to find a $B_3$-linear differential $D$ on $B_3\otimes \cC_{CE}(L_3)$ and a quasi-isomorphism 
\[\Psi:(B_3\otimes \cC_{CE}(L_3),D)\longrightarrow (B_3\otimes WU_3,\tilde{d})\]
of $B_3$-algebras that extends $\phi$. 
\begin{lem}
	A relative Sullivan model of $F_3\hcoker \text{SO}(3)$ of the form $(B_3\otimes \mathcal{C}_{CE}(L_3),D)$ extending the differential on $\mathcal{C}_{CE}(L_3)$ for generators of degrees $\leq 15$ is given by 
	\begin{equation}\label{D}
		\begin{aligned}
			D(x_2)&=-p_1^2, & D(x_6)&=-p_1x_1, & D(x_7)&=-p_1x_3,\\
			D(x_8)&=-p_1x_4, & D(x_{10})&=p_1x_5, & D(x_{14})&=p_1x_9,\\
			D(x_{15})&=-p_1x_{11}, & D(x_{16})&=-p_1x_{12}, & D(x_{17})&=-p_1x_{13},\\
			D(x_{1,2})&=x_1x_2+p_1x_6, & D(x_{1,3})&=x_1x_3, & D(x_{1,4})&=x_1x_4,\\
			D(x_{2,3})&=x_2x_3-p_1x_7, & D(x_{2,4})&= x_2x_4- p_1x_8, & D(x_{3,4})&=x_3x_4,\\
			D(x_{1,5})&=x_1x_5, & D(x_{2,5})&=x_2x_5+ p_1x_{10}, & D(x_{3,5})&=x_3x_5,\\
			D(x_{4,5})&=x_4x_5,			
		\end{aligned}
	\end{equation}
	and $D(x_i)=0$ for $i=1,3,4,5,9,11,12,13$. 
\end{lem}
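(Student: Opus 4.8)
The plan is to carry out explicitly, degree by degree, the inductive construction of the $B_3$-linear differential $D$ on $B_3\otimes\mathcal{C}_{CE}(L_3)$ and of the quasi-isomorphism $\Psi\colon(B_3\otimes\mathcal{C}_{CE}(L_3),D)\to(B_3\otimes WU_3,\tilde d)$ announced just before the statement, starting from the quasi-isomorphism $\phi$ fixed above and the relative model $(B_3\otimes WU_3,\tilde d)$ of Lemma~\ref{ModelFn}. At each stage one extends $D$ (extending the differential $d$ of $\mathcal{C}_{CE}(L_3)$) and $\Psi$ (extending $\phi$) over the next generator so that $\Psi$ remains a chain map; this is always possible since $(B_3\otimes WU_3,\tilde d)$ is a model, so the only actual content is the explicit form of $D$ through degree $15$, a finite computation.

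The computation becomes transparent once one observes that, since $\tilde d(h_1)=c_1$, $\tilde d(h_3)=c_3$ and $\tilde d(h_2)=c_2+p_1$ by Lemma~\ref{ModelFn}, the differential on $B_3\otimes WU_3$ splits as $\tilde d=d_{WU_3}+p_1\,\sigma$, where $d_{WU_3}$ is the differential of \eqref{WU_d} and $\sigma$ is the $B_3$-linear derivation of degree $-3$ determined by $\sigma(h_2)=1$ and $\sigma(h_1)=\sigma(h_3)=\sigma(c_i)=0$. Since $\phi$ is a chain map for $d_{WU_3}$, this yields for every generator $x$
\[
\tilde d\bigl(\phi(x)\bigr)=\phi\bigl(d(x)\bigr)+p_1\,\sigma\bigl(\phi(x)\bigr)\in B_3\otimes WU_3 .
\]
I would then run the induction: having fixed $\Psi$ in degrees $<n$, for a degree-$n$ generator $x$ I would evaluate $\sigma(\phi(x))$ from the explicit table for $\phi$, simplified by the defining relation of $WU_3$ that every monomial in $c_1,c_2,c_3$ of degree $>6$ vanishes. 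In every case in the range this expresses $\tilde d(\phi(x))$, up to at most a single correction to $\Psi$ on a lower-degree generator, as $\Psi$ applied to an explicit $D$-cocycle built from lower generators; that cocycle is $D(x)$, and its $\mathcal{C}_{CE}(L_3)$-internal part is forced to be $d(x)$, i.e.\ $0$ for the $x_i$ and $x_ix_j$ for $x_{i,j}$.

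Two sample steps show how \eqref{D} emerges. For $x_2$ one has $\sigma(\phi(x_2))=\sigma(c_2h_2)=c_2$, hence $\tilde d(\phi(x_2))=p_1c_2$; as $c_2=d_{WU_3}(h_2)$ and $\tilde d(h_2)=c_2+p_1$, the choice $\Psi(x_2)=c_2h_2-p_1h_2$ gives $\tilde d(\Psi(x_2))=-p_1^{2}\in B_3$, which forces $D(x_2)=-p_1^{2}$. For $x_6$, with $\phi(x_6)=c_2h_1h_3-c_1h_2h_3$, one finds $\sigma(\phi(x_6))=-c_1h_3=-\phi(x_1)$ and $d_{WU_3}(\phi(x_6))=0$, so $\tilde d(\phi(x_6))=\Psi(-p_1x_1)$ and $D(x_6)=-p_1x_1$, now with no correction to $\Psi$ since $\phi(x_1)$ is a $d_{WU_3}$-cocycle already in the image of $\phi$. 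The entries $D(x_i)=\pm p_1x_k$ for $i\in\{7,8,10,14,15,16,17\}$ come out the same way from $\sigma(\phi(x_i))=\pm\phi(x_k)$, and $D(x_i)=0$ for $i\in\{1,3,4,5,9,11,12,13\}$ because those $\phi(x_i)$ contain no $h_2$; for the bracket-length-two generators one combines $d(x_{i,j})=x_ix_j$ with $\sigma(\phi(x_{i,j}))$ and the single correction $\Psi(x_2)=c_2h_2-p_1h_2$, which is exactly what produces the $p_1$-terms in $D(x_{1,2}),D(x_{2,3}),D(x_{2,4}),D(x_{2,5})$, while the remaining $D(x_{i,j})$ stay equal to $x_ix_j$ since the relevant products $\phi(x_i)\phi(x_j)$ vanish.

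I expect the main obstacle to be purely bookkeeping: keeping track of Koszul signs in $\sigma$ and in the products $\phi(x_i)\phi(x_j)$, and checking at each stage that the correction to $\Psi$ can be confined to generators that already appeared in earlier corrections, so that $D$ keeps the stated shape. This succeeds because the image of $\phi$ is spanned by the monomials listed in the table and $\phi(x_i)\phi(x_j)=0$ for all $i,j$ except $(i,j)=(1,2)$, where $\phi(x_1)\phi(x_2)=-c_1c_2h_2h_3=d_{WU_3}(\phi(x_{1,2}))$; this single non-vanishing product is also the source of the $p_1x_6$ term in $D(x_{1,2})$. That the resulting $D$ satisfies $D^{2}=0$ is automatic from $\Psi$ being a chain map into a dg algebra, but it can also be verified directly on the finitely many generators.
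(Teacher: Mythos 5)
Your proposal is correct and follows essentially the same route as the paper: an inductive construction of the quasi-isomorphism $\Psi$ extending $\phi$ together with the differential $D$, correcting $\Psi(x_2)$ by $-p_1h_2$ and reading off $D$ from $\tilde d(\Psi(z))=\Psi(D(z))$; I have checked your sample computations and the remaining entries of \eqref{D} against this scheme and they agree. The decomposition $\tilde d=d_{WU_3}+p_1\sigma$ with $\sigma(h_2)=1$ is a pleasant organizing device that makes the paper's ``straightforward yet lengthy computation'' transparent, but it does not change the argument.
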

\begin{proof}
Let $\Psi:(B_3\otimes \cC_{CE}(L_3),D)\to (B_3\otimes WU_3,\tilde{d})$ be a relative Sullivan model of $B_3\to (B_3\otimes WU_3,\tilde{d})$. Since $\cC_{CE}(L_3)$ is a minimal Sullivan algebra and quasi-isomorphisms of $\cC_{CE}(L_3)$ are isomorphisms, one can modify any relative Sullivan model so that $\Psi\otimes_{B_3}\bQ=\phi$. We can construct $\Psi$ inductively using the filtration of $s(L_3)^{\vee}$ given by degree or bracket length. Assuming we have defined $D$ and $\Psi$ for generators of filtration $\leq k$, for every generator $z$ of degree $k+1$ we can find elements $a\in B_3^+\otimes WU_3$ and $b\in B_3^+\otimes \cC_{CE}(L_3)$ so that 
\[\tilde{d}(\phi(z)+a)=\Psi(d(z)+b),\]
and we set 
\begin{align*}
 D(z)&:=d(z)+b,\\
 \Psi(z)&:=\phi(z)+a.
\end{align*}
For example, in degree $7$ there are generators $x_i$ for $i=1,2,3,4$. For $i\neq 2$ we have that $\tilde{d}(\phi(x_i))=0$ and for all $i$ the differential $d(x_i)$ vanishes. Hence, we can set $D(x_i)=0$ and $\Psi(x_i)=\phi(x_i)$ for $i=1,3,4$. For $i=2$ we have that $\tilde{d}(\phi(x_2))=p_1c_2$ and therefore
\[\tilde{d}(\phi(x_2)-p_1h_2)=-p_1^2=\Psi(-p_1^2)\]
since $\Psi$ is a $B_3$-algebra homomorphism, and hence 
\begin{align*}
 D(x_2)&=-p_1^2,  & \Psi(x_2)&=\phi(x_2)-p_1h_2.
\end{align*}
It is a straightforward yet lengthy computation to show $\Psi(x_i)=\phi(x_i)$ and differential as in \eqref{D} for all generators $x_i$ of bracket length one except for $i=2$. And similar computations for the next inductive step involving generators of bracket length $2$ and degree $\leq 15$  show that $\Psi(x_{i,j})=\phi(x_{i,j})$ with the differential as stated in \eqref{D}.
\end{proof}
\begin{proof}[Proof of Proposition \ref{SmoothCohomology}]
Using Proposition \ref{ModelEv} we obtain a $B_4$-relative Sullivan model of $\Gamma_{\bS^3}\hcoker \SO(4)$ with algebra generators of degree $\leq 12$ given in the table below. The differential can be computed from Corollary \ref{Dbar} and \eqref{D}, and we give the first few terms below.

\smallskip
\begin{center}
 \begin{tabular}{c|c c}
 degree & generators & differential \\\hline 
 $4$  & $\bar{x}_1,\ \bar{x}_2,\ \bar{x}_3,\ \bar{x}_4$ & $\overline{D}(\bar{x}_i)=0\quad i=1,2,3,4$  \\[5pt]  
 $5$ & - & -\\[5pt]
 $6$ & $\bar{x}_5 $ & $\overline{D}(\bar{x}_5)=0$ \\[5pt] 
 $7$ & $\bar{x}_6,\ \bar{x}_7,\ \bar{x}_8,\ x_1,\ x_2,\ x_3,\ x_4$ & \begin{minipage}[c]{0.33\textwidth}
                                                                     {\begin{align*}
                                                                      \overline{D}(\bar{x}_6)&=p_1\bar{x}_1, \\ \overline{D}(\bar{x}_7)&=-p_1\bar x_3,\\ \overline{D}(\bar{x}_8)&=-p_1\bar x_4, \\
                                                                      \overline{D}(x_1)&=-e\bar{x}_1, \\
                                                                      \overline{D}(x_2)&=-p_1^2-e\bar x_2 \\
                                                                      \overline{D}(x_3)&=-e\bar x_3 \\ \overline{D}(x_4)&=-e\bar x_4 \\
                                                                     \end{align*}}
                                                                     \end{minipage}
 \\[5pt] 
 $8$ & $\bar{x}_9$ & etc. \\[5pt]
 $9$ & $\bar{x}_{10},\ \bar{x}_{11},\ \bar{x}_{12},\ \bar{x}_{13},\ x_5$ & \\[5pt]
 $10$ & $x_6,\ x_7,\ x_8,\ \{\overline{x_{i,j}}\}_{1\leq i<j\leq 4}$ & \\[5pt]
 $11$ & $\bar{x}_{14},\ x_9$ & \\[5pt]\hdashline
 $12$ & \begin{minipage}[c]{0.5\textwidth}
         \begin{align*}\bar{x}_{15},\ \bar{x}_{16},\ \bar{x}_{17}\\
          x_{10},\ x_{11},\ x_{12},\ x_{13}\\
          \overline{x_{1,5}},\ \overline{x_{2,5}},\ \overline{x_{3,5}},\ \overline{x_{4,5}} 
        \end{align*}
        \end{minipage}
      & 
 \end{tabular}
\end{center}
With this it is now a straightforward computation to determine $H^*(\Gamma_{\bS^3}\hcoker \SO(4);\bR)$ in degrees $*\leq 12$. The statement then follows from Theorem \ref{RelativeGFModel} and the van-Est isomorphism $H_{\mathrm{sm}}(\Diff_+(\bS^3))\cong H^*(\mathcal{L}_{\bS^3};\SO(4))$ (see \cite[pg.\ 43]{Hae79}).
\end{proof}

\subsection*{Acknowledgements} I would like to thank Sam Nariman for asking me the question about Haefliger's work that started this project. I also thank Alexander Berglund for helpful discussions. This research was supported by the Knut and Alice Wallenberg foundation through grant no.\ 2019.0519.

\bibliographystyle{alpha}
\bibliography{../../../Bibliography/central-bib}

\begin{thebibliography}{FHT01}

\bibitem[Ber15]{Ber15}
A.~Berglund.
\newblock Rational homotopy theory of mapping spaces via {L}ie theory for
  {$L_\infty$}-algebras.
\newblock {\em Homology Homotopy Appl.}, 17(2):343--369, 2015.

\bibitem[Ber22]{Ber20}
A.~Berglund.
\newblock Characteristic classes for families of bundles.
\newblock {\em Selecta Math. (N.S.)}, 28(3):Paper No. 51, 56, 2022.

\bibitem[Bot77]{Bo77}
Raoul Bott.
\newblock On the characteristic classes of groups of diffeomorphisms.
\newblock {\em Enseign. Math. (2)}, 23(3-4):209--220, 1977.

\bibitem[BS77]{BS77}
R.~Bott and G.~Segal.
\newblock The cohomology of the vector fields on a manifold.
\newblock {\em Topology}, 16(4):285--298, 1977.

\bibitem[FHT01]{FHT}
Y.~F\'elix, S.~Halperin, and J.-C. Thomas.
\newblock {\em Rational homotopy theory}, volume 205 of {\em Graduate Texts in
  Mathematics}.
\newblock Springer-Verlag, New York, 2001.

\bibitem[GF69]{GF69}
I.~M. Gelfand and D.~B. Fuks.
\newblock Cohomologies of the {L}ie algebra of tangent vector fields of a
  smooth manifold.
\newblock {\em Funkcional. Anal. i Prilo\v zen.}, 3(3):32--52, 1969.

\bibitem[GF70]{GF70}
I.~M. Gelfand and D.~B. Fuks.
\newblock Cohomologies of the {L}ie algebra of formal vector fields.
\newblock {\em Izv. Akad. Nauk SSSR Ser. Mat.}, 34:322--337, 1970.

\bibitem[God74]{Go74}
C.~Godbillon.
\newblock Cohomologies d'alg\`ebres de {L}ie de champs de vecteurs formels.
\newblock In {\em S\'eminaire {B}ourbaki, 25\`eme ann\'ee (1972/1973)}, volume
  Vol. 383 of {\em Lecture Notes in Math.}, pages Exp. No. 421, pp. 69--87.
  Springer, Berlin-New York, 1974.

\bibitem[GS99]{GS99}
V.~Guillemin and S.~Sternberg.
\newblock {\em Supersymmetry and equivariant de {R}ham theory}.
\newblock Mathematics Past and Present. Springer-Verlag, Berlin, 1999.
\newblock With an appendix containing two reprints by Henri Cartan [MR0042426
  (13,107e); MR0042427 (13,107f)].

\bibitem[Hae73]{Hae73}
A.~Haefliger.
\newblock Sur les classes caract\'eristiques des feuilletages.
\newblock In {\em S\'eminaire {B}ourbaki, 24\`eme ann\'ee (1971/1972)}, volume
  Vol. 317 of {\em Lecture Notes in Math.}, pages Exp. No. 412, pp. 239--260.
  Springer, Berlin-New York, 1973.

\bibitem[Hae76]{Hae76}
A.~Haefliger.
\newblock Sur la cohomologie de l'alg\`ebre de {L}ie des champs de vecteurs.
\newblock {\em Ann. Sci. \'Ecole Norm. Sup. (4)}, 9(4):503--532, 1976.

\bibitem[Hae78]{Hae78}
A.~Haefliger.
\newblock On the {G}elfand-{F}uks cohomology.
\newblock {\em Enseign. Math. (2)}, 24(1-2):143--160, 1978.

\bibitem[Hae79]{Hae79}
A.~Haefliger.
\newblock Differential cohomology.
\newblock In {\em Differential topology ({V}arenna, 1976)}, pages 19--70.
  Liguori, Naples, 1979.

\bibitem[Mor84]{Mor84}
S.~Morita.
\newblock Nontriviality of the {G}elfand-{F}uchs characteristic classes for
  flat {$S\sp{1}$}-bundles.
\newblock {\em Osaka J. Math.}, 21(3):545--563, 1984.

\bibitem[Nar23]{Nar23}
S.~Nariman.
\newblock On invariants of foliated sphere bundles.
\newblock {\em to appear in Commentarii Mathematici Helvetici}, 2023.

\bibitem[Pri24]{Pri24I}
N.~Prigge.
\newblock A note on invariants of foliated 3-sphere bundles.
\newblock {\em arXiv preprint arXiv:2409.06408}, 2024.

\bibitem[SC456]{SC49}
{\em S\'eminaire {H}enri {C}artan de l'{E}cole {N}ormale {S}up\'erieure,
  1949/1950. {E}spaces fibr\'es et homotopie}.
\newblock Secr\'etariat Math\'ematique, 11 rue Pierre Curie, Paris, 1956.
\newblock 18 expos\'es par Blanchard, A.; Borel, A.; Cartan, H.; Serre, J. P.;
  and Wen Ts\"un, Wu, 2\`eme \'ed.

\end{thebibliography}
\end{document}